\newcommand{\pl}{\textbf{pl}}
\newcommand{\Ca}{\textit{\textbf{C}}}
\newcommand{\Gr}{\mathrm{Gr}}
\newcommand{\PP}{\mathbb{P}}
\newcommand{\RR}{\mathbb{R}}
\newcommand{\CC}{\mathbb{C}}
\newcommand{\QQ}{\mathbb{Q}}
\theoremstyle{plain}
\newtheorem{theorem}{Theorem}[section] 
\newtheorem{proposition}[theorem]{Proposition}
\newtheorem{lemma}[theorem]{Lemma}
\newtheorem{conjecture}[theorem]{Conjecture}
\newtheorem{introtheorem}{Theorem} 
\newtheorem{introconjecture}[introtheorem]{Conjecture}
\theoremstyle{definition}
\theoremstyle{plain}
\newtheorem{example}[theorem]{Example}
\newtheorem{remark}[theorem]{Remark}
\definecolor{codedarkgreen}{RGB}{51, 133, 4}
\definecolor{codemaroon}{RGB}{133, 5, 63}
\definecolor{codeteal}{RGB}{0, 128, 96}
\lstdefinelanguage{Macaulay2}{
basicstyle=\small\ttfamily,
alsoletter=",
classoffset=1,
keywords={sub,join,matrix,minors,gb,transpose,det,ideal,apply,subsets,ker,gens,fold,flatten,entries},
keywordstyle={\color{blue}},
classoffset=2,
morekeywords={from, to, list},
keywordstyle={\color{codemaroon}},
classoffset=3,
morekeywords={QQ},
keywordstyle={\color{codedarkgreen}},
classoffset=4,
morekeywords={MonomialOrder},
keywordstyle={\color{codeteal}},
xleftmargin=1.5cm,
xrightmargin=1em,
columns=fullflexible,
keepspaces=true,
stepnumber=1,
numbers=none,
captionpos=b,
showspaces=false,
frame=none
}
\begin{document}

\title{Projections of Curves and Conic Multiview Varieties}

\thispagestyle{empty}
\date{}

\author{Felix Rydell, Isak Sundelius}

\begin{abstract} We present an algebraic study of the projection of plane curves and twisted cubics in space onto multiple images of pinhole cameras. The Zariski closure of the image of the projection of conics is a conic multiview varieties. Extending previous work for point and line multiview varieties, we use back-projected cones to describe these varieties. For two views, we provide the defining ideal of the multiview variety. For any number of views, we state when the simplest possible set-theoretic description is achieved based on the geometry of the camera centers. Finally, we investigate the complexity of the associated triangulation problem and conjecture the Euclidean distance degree for the conic multiview variety for two cameras.
\end{abstract}

\maketitle

\setcounter{tocdepth}{2}

\pagenumbering{arabic}


\section*{Introduction} 

A \textit{pinhole camera} is a linear rational map
\begin{align}\begin{aligned}
    \mathbb P^3 &\dashrightarrow \mathbb P^2,\\
    X&\mapsto CX.
\end{aligned}
\end{align}
The \textit{camera matrix} $C\in\mathbb C^{3\times 4}$ is assumed to be full rank. This map is well-defined away from the \textit{center} $c:=\ker P$. Given a \textit{camera arrangement} $\Ca=(C_1,\ldots,C_n)$ of $n$ camera matrices, we get the \textit{joint camera} map
\begin{align}\begin{aligned}\label{eq: point proj}
    \Phi_{\Ca}:\mathbb P^3 &\dashrightarrow (\mathbb P^2)^n,\\
    X&\mapsto (C_1X,\ldots,C_nX).
\end{aligned}
\end{align}
The Zariski closure of the image $\mathrm{Im}\; \Phi_\Ca$ is denoted $\mathcal M_\Ca$ and is called a \textit{multiview variety}. We think of it as the set of possible simultaneous pictures that can be taken of a point with $\Ca$. These varieties are fundamental objects in the \textit{Structure-from-Motion} pipeline, which aims to build 3D models based on 2D images \cite{Hartley2004,kileel2022snapshot}. They also well-investigated in \textit{Algebraic Vision}; the symbiosis of Algebraic Geometry and Computer Vision. As such, the Algebraic Vision community has studied both set-theoretic and ideal-theoretic properties of $\mathcal M_\Ca$ in great detail \cite{aholt2013hilbert,trager2015joint,agarwal2019ideals}. Optimization properties related to these varieties have also been studied, such as the Sampson error \cite{rydell2024revisiting} and Euclidean distance degrees \cite{harris2018chern,EDDegree_point}. The \textit{Euclidean distance degree} of a variety is the number of complex critical points to the nearest point problem given generic data \cite{draisma2016euclidean}.

An initial step in the Structure-from-Motion pipeline is to extract matching features from the data, called \textit{correspondences}. Classically, this is for example done via SIFT \cite{lowe2004distinctive}. Recently, new methods based on neural network have emerged, such as SuperGlue \cite{sarlin2020superglue}, LightGlue \cite{lindenberger2023lightglue} and Gluestick \cite{pautrat2023gluestick}. Point and line features are commonly matched across images. However, other image features such as conics or higher degree planar curves can also be matched. This is described using classical methods in \cite[Section 4]{schmid2000geometry}. As with lines, curves are global features and their detection and matching is therefore less prone to producing outliers compared to points. One prominent example where conics appear is in the image of rolling-shutter cameras. More precisely, it is known that rolling-shutter cameras map lines to conics (or higher degree curves) \cite{hahn2024order}. Conics have also been used to approximate non-linear features \cite{forssen2005view}. 

This paper studies the projection of curves in space to camera planes from the algebraic point of view. In order to explain our results, we need some notation. Write $\mathrm{Ch}_{\mathrm{plane}}(d,\PP^3)$ for the set of degree-$d$, $d\ge 2$, plane curves in $\PP^3$ and $\mathrm{Ch}_{\mathrm{tw}}(\PP^3)$ for the set of twisted cubics in $\PP^3$, identified with their Chow forms. Denote by $\nu_{d}^N:\PP^N\to \PP^{{d+N\choose N}-1}$ the Veronese embedding sending $X=(X_0:\cdots:X_N)$ to the vector of all monomials of degree $d$ in the variables of $X$. For a $(h+1)\times(N+1)$ matrix $A$, let $\nu_d^{h,N}(A)$ denote the induced matrix satisfying the relation $\nu_d^h(AX)=\nu_d^{h,N}(A)\nu_d^N(X)$. In $\PP^2$, degree-$d$ curves are parametrized by $\PP^{{d+2\choose 2}-1}$, as each of its elements $\gamma$ defines a curve via $\{x\in \PP^2: \gamma^\top \nu_d^2(x)=0\}$. In \Cref{s: Pre}, we explain necessary notation and concepts in more detail and give an overview of the mathematical tools that we use. In \Cref{s: Pre,s: Exp Image Maps}, we provide the following parametrizations 
\begin{align}\label{eq: iota}
\begin{split}
     \iota_d:(\PP^3)^*\times\PP^{{d+2\choose 2}-1}\dashrightarrow\mathrm{Ch}_{\mathrm{plane}}(d,\PP^3),
\end{split}
\begin{split}
     \iota_{\mathrm{tw}}: \PP^{12}&\dashrightarrow\mathrm{Ch}_{\mathrm{tw}}(d,\PP^3),
\end{split}
\end{align}
where $\iota_d$ is generically 1-to-1 and $\iota_{\mathrm{tw}}$ is generically 3-to-1. Further, given a camera $C$, we construct the following explicit projection maps that send curves to their image curves, 
\begin{align}\label{eq: maps one cam}
\begin{split}
     \mathrm{Ch}_{\mathrm{plane}}(d,\PP^3)&\dashrightarrow \PP^{{d+2\choose 2}-1},\\
   \beta &\mapsto  \nu_{d}^{5,2}(\hat{C})^\top\beta,
\end{split}
\begin{split}
     \mathrm{Ch}_{\mathrm{tw}}(d,\PP^3)&\dashrightarrow \PP^9,\\
   \beta &\mapsto  \nu_{3}^{5,2}(\hat{C})^\top\beta.
\end{split}
\end{align}
Here, $\hat{C}$ is a $6\times 3$ matrix that depends on $C$. For camera arrangements $\Ca$, \eqref{eq: maps one cam} induces joint camera maps $\Phi_{\Ca,d}^{\mathrm{Ch}}$ and $\Psi_{\Ca}^{\mathrm{Ch}}$ that model the projection of curves onto the camera planes of $\Ca$. Composing with the maps of \eqref{eq: iota}, their domains become $(\PP^3)^*\times\PP^{{d+2\choose 2}-1}$ and $\PP^{12}$, respectively. The Zariski closure of the image $\mathrm{Im}\; \Phi_{\Ca,d}^{\mathrm{Ch}}$ is called a \textit{degree-$d$ plane curve multiview variety} and is written $\mathcal C_{\Ca,d}$. For $d=2$, we call it a \textit{conic multiview variety}. Similarly, the Zariski closure of the image $\mathrm{Im}\; \Psi_{\Ca}^{\mathrm{Ch}}$ is a \textit{twisted cubic multiview variety} and is written $\mathcal C_{\Ca}^{\mathrm{tw}}$. These varieties are irreducible and their dimensions are computed in \Cref{prop: dim}. In particular, the dimension of the conic multiview variety given at least two cameras of distinct centers is 8-dimensional. In \Cref{s: Cones}, we explore the geometry of back-projected cones; see \Cref{fig: conics} and \Cref{fig: twisted}. In \Cref{s: Conic MV}, we study set-theoretic equations and conditions for the conic multiview variety. Our first result is the following.


\begin{figure}
    \begin{center}
\scalebox{-1}[1]{\includegraphics[width = 0.43\textwidth]{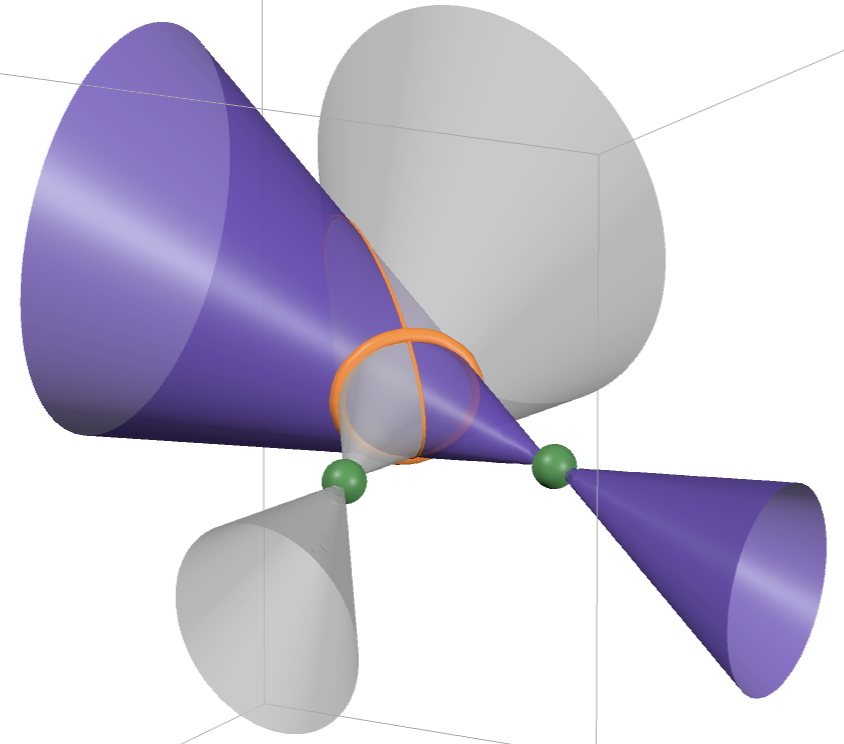}}
    \hspace{0.5cm}
\begin{tikzpicture}[scale=0.28]
\draw[fill] (1,5) circle (4pt) node[below] {${c_1}$};
\draw[fill] (11.5,1) circle (4pt) node[below] {${c_2}$};
\draw[fill] (22,5) circle (4pt) node[below] {${c_3}$};
\draw (1,8) -- (1,14);
\draw (1,8) -- (6,5);
\draw (6,5) -- (6,11);
\draw (1,14) -- (6,11)  node[above=8] {${B_1}$};
\draw (9,3) -- (9,9);
\draw (9,3) -- (14,3);
\draw (14,3) -- (14,9);
\draw (14,9) -- (9,9) node[above=8, midway] {${B_2}$};
\draw (17,4) -- (17,10);
\draw (17,4) -- (22,7);
\draw (22,7) -- (22,13);
\draw (17,10) -- (22,13) node[left=35] {${B_3}$};
\draw[line width=1.5pt, violet,rotate around={-35:(3.5,8.25)}] (3.5,8.25) ellipse (0.9cm and 0.45cm) node[black, above=0.15cm] {${\gamma_1}$};
\draw[line width=1.5pt, violet,rotate around={10:(11.6,4.7)}] (11.6,4.7) ellipse (1cm and 0.4cm) node[black, above=0.15cm] {${\gamma_2}$};
\draw[line width=1.5pt, violet,rotate around={30:(19.55,8.5)}] (19.55,8.5) ellipse (1.1cm and 0.44cm) node[black, above=0.15cm] {${\gamma_3}$};

\draw (1,5) -- (2.6,8.5);
\draw[dotted] (2.6,8.5) -- (4.2, 12);
\draw (4.2,12) -- (7,18);
\draw (1,5) -- (1+15.5*0.2,5+13.26*0.2);
\draw[dotted] (1+15.5*0.2,5+13.26*0.2) -- (1+15.5*0.32,5+13.26*0.32);
\draw (1+15.5*0.32,5+13.26*0.32) -- (1+15.5*0.49,5+13.26*0.49);
\draw[dotted] (1+15.5*0.49,5+13.26*0.49) -- (1+15.5*1,5+13.26*1);

\draw (11.5,1) -- (11.5-4.4*0.215,1+16.6*0.215);
\draw[dotted] (11.5-4.4*0.215,1+16.6*0.215) -- (11.5-4.4*0.48,1+16.6*0.48);
\draw (11.5-4.4*0.48,1+16.6*0.48) -- (11.5-4.4*0.64,1+16.6*0.64);
\draw[dotted] (11.5-4.4*0.64,1+16.6*0.64) -- (11.5-4.4*1,1+16.6*1);
\draw (11.5,1) -- (11.5+5.5*0.21,1+18*0.21);
\draw[dotted] (11.5+5.5*0.21,1+18*0.21) -- (11.5+5.5*0.445,1+18*0.445);
\draw (11.5+5.5*0.445,1+18*0.445) -- (11.5+5.5*0.565-0.05,1+18*0.565);
\draw[dotted] (11.5+5.5*0.565,1+18*0.565) -- (11.5+5.5*1-0.05,1+18*1);

\draw(22,5) -- (22-14.6*0.24,5+12.1*0.24);
\draw[dotted](22-14.6*0.24,5+12.1*0.24) -- (22-14.6*0.34,5+12.1*0.34);
\draw(22-14.6*0.34,5+12.1*0.34) -- (22-14.6*0.51,5+12.1*0.51);
\draw[dotted](22-14.6*0.51,5+12.1*0.51) -- (22-14.6*1,5+12.1*1);

\draw(22,5) -- (22-5*0.28,5+14.4*0.28);
\draw[dotted](22-5*0.28,5+14.4*0.28) -- (22-5*0.46,5+14.4*0.46);
\draw(22-5*0.46,5+14.4*0.46) -- (22-5*1,5+14.4*1);


\fill[ teal!50, opacity=0.3] (1,8) -- (6,5) -- (6,11) -- (1,14) -- cycle;
\fill[ teal!50, opacity=0.3] (9,3) -- (14,3) -- (14,9) -- (9,9) -- cycle;
\fill[ teal!50, opacity=0.3] (17,4) -- (22,7) -- (22,13) -- (17,10) -- cycle;

\draw[line width=1.5pt, violet,rotate around={10:(12,18.5)}] (12,18.5) ellipse (5cm and 2cm) node[black, above=-0.1cm] {${C}$};
\end{tikzpicture}
\end{center}
    \caption{On the left, we show two back-projected cones through two green camera centers defined by the conic curve in thick orange. The cones intersect in a degree-4 curve, in this case in the union of two conics. The second conic is drawn in thin orange. Created using \cite{desmos}. On the right, a conic curve $C$ is projected onto three camera planes leaving three conic image curves. The back-projected cones are denoted by $B_i$.}
    \label{fig: conics} 
\end{figure}


\begin{figure}
    \centering
\begin{tikzpicture}[scale=0.45]
\draw[white,fill] (8,8) circle (4pt) node[below] {};
\draw (6.75,15) -- (6.75,9);
\draw (6.75,9) --  (11.75,8);
\draw (11.75,8) -- (11.75,14);
\draw (11.75,14)-- (6.75,15);
\draw (15.5,16) -- (15.5,10);
\draw  (15.5,10) -- (20.5,12);
\draw (20.5,12) -- (20.5,18);
\draw (20.5,18) -- (15.5,16);

\draw[line width=1.5pt, scale=1, domain=-1.2:0, smooth, variable=\x, violet,rotate around={-80:(22-10*0.375,12+8.6*0.375)}] plot ({1.4*\x*\x+22-10*0.375}, {1.2*\x*\x*\x+12+8.6*0.375});
\draw[line width=1.4pt, scale=1, domain=0:0.2, smooth, variable=\x, violet,rotate around={-80:(22-10*0.375,12+8.6*0.375)}] plot ({1.4*\x*\x+22-10*0.375}, {1.2*\x*\x*\x+12+8.6*0.375});
\draw[line width=1.3pt, scale=1, domain=0.2:0.4, smooth, variable=\x, violet,rotate around={-80:(22-10*0.375,12+8.6*0.375)}] plot ({1.4*\x*\x+22-10*0.375}, {1.2*\x*\x*\x+12+8.6*0.375});
\draw[line width=1.2pt, scale=1, domain=0.4:0.6, smooth, variable=\x, violet,rotate around={-80:(22-10*0.375,12+8.6*0.375)}] plot ({1.4*\x*\x+22-10*0.375}, {1.2*\x*\x*\x+12+8.6*0.375});
\draw[line width=1.1pt, scale=1, domain=0.6:0.8, smooth, variable=\x, violet,rotate around={-80:(22-10*0.375,12+8.6*0.375)}] plot ({1.4*\x*\x+22-10*0.375}, {1.2*\x*\x*\x+12+8.6*0.375});
\draw[line width=0.9pt, scale=1, domain=0.8:0.9, smooth, variable=\x, violet,rotate around={-80:(22-10*0.375,12+8.6*0.375)}] plot ({1.4*\x*\x+22-10*0.375}, {1.2*\x*\x*\x+12+8.6*0.375});
\draw[line width=0.8pt, scale=1, domain=0.9:1, smooth, variable=\x, violet,rotate around={-80:(22-10*0.375,12+8.6*0.375)}] plot ({1.4*\x*\x+22-10*0.375}, {1.2*\x*\x*\x+12+8.6*0.375});

\draw[line width=1.5pt, scale=1, domain=-1.45:0, smooth, variable=\x, violet,rotate around={-90:(7+5*0.45,6+12.5*0.45)}] plot ({1*(\x*\x-1)+7+5*0.45}, {1*(\x*\x*\x-\x)+6+12.5*0.45});
\draw[line width=1.4pt, scale=1, domain=0:0.2, smooth, variable=\x, violet,rotate around={-90:(7+5*0.45,6+12.5*0.45)}] plot ({1*(\x*\x-1)+7+5*0.45}, {1*(\x*\x*\x-\x)+6+12.5*0.45});
\draw[line width=1.3pt, scale=1, domain=0.2:0.4, smooth, variable=\x, violet,rotate around={-90:(7+5*0.45,6+12.5*0.45)}] plot ({1*(\x*\x-1)+7+5*0.45}, {1*(\x*\x*\x-\x)+6+12.5*0.45});
\draw[line width=1.2pt, scale=1, domain=0.4:0.6, smooth, variable=\x, violet,rotate around={-90:(7+5*0.45,6+12.5*0.45)}] plot ({1*(\x*\x-1)+7+5*0.45}, {1*(\x*\x*\x-\x)+6+12.5*0.45});
\draw[line width=1.1pt, scale=1, domain=0.6:0.8, smooth, variable=\x, violet,rotate around={-90:(7+5*0.45,6+12.5*0.45)}] plot ({1*(\x*\x-1)+7+5*0.45}, {1*(\x*\x*\x-\x)+6+12.5*0.45});
\draw[line width=1pt, scale=1, domain=0.8:1, smooth, variable=\x, violet,rotate around={-90:(7+5*0.45,6+12.5*0.45)}] plot ({1*(\x*\x-1)+7+5*0.45}, {1*(\x*\x*\x-\x)+6+12.5*0.45});
\draw[line width=0.8pt, scale=1, domain=1:1.35, smooth, variable=\x, violet,rotate around={-90:(7+5*0.45,6+12.5*0.45)}] plot ({1*(\x*\x-1)+7+5*0.45}, {1*(\x*\x*\x-\x)+6+12.5*0.45});



\fill[ teal!50, opacity=0.3] (6.75,15) -- (6.75,9) -- (11.75,8) -- (11.75,14) -- cycle;
\fill[ teal!50, opacity=0.3] (15.5,16) -- (15.5,10) -- (20.5,12) -- (20.5,18) -- cycle;

\draw[line width=1.5pt, scale=1, domain=-1.45:0, smooth, variable=\x, violet,rotate around={-80:(12,18.5)}] plot ({2*(\x*\x-1)+12}, {2*(\x*\x*\x-\x)+18.5});
\draw[line width=1.4pt, scale=1, domain=0:0.2, smooth, variable=\x, violet,rotate around={-80:(12,18.5)}] plot ({2*(\x*\x-1)+12}, {2*(\x*\x*\x-\x)+18.5});
\draw[line width=1.3pt, scale=1, domain=0.2:0.4, smooth, variable=\x, violet,rotate around={-80:(12,18.5)}] plot ({2*(\x*\x-1)+12}, {2*(\x*\x*\x-\x)+18.5});
\draw[line width=1.2pt, scale=1, domain=0.4:0.6, smooth, variable=\x, violet,rotate around={-80:(12,18.5)}] plot ({2*(\x*\x-1)+12}, {2*(\x*\x*\x-\x)+18.5});
\draw[line width=1.1pt, scale=1, domain=0.6:0.8, smooth, variable=\x, violet,rotate around={-80:(12,18.5)}] plot ({2*(\x*\x-1)+12}, {2*(\x*\x*\x-\x)+18.5});
\draw[line width=1pt, scale=1, domain=0.8:0.925, smooth, variable=\x, violet,rotate around={-80:(12,18.5)}] plot ({2*(\x*\x-1)+12}, {2*(\x*\x*\x-\x)+18.5});
\draw[line width=0.8pt, scale=1, domain=1.075:1.35, smooth, variable=\x, violet,rotate around={-80:(12,18.5)}] plot ({2*(\x*\x-1)+12}, {2*(\x*\x*\x-\x)+18.5});
\end{tikzpicture}
\hspace{0.5cm}
\includegraphics[width = 0.45\textwidth]{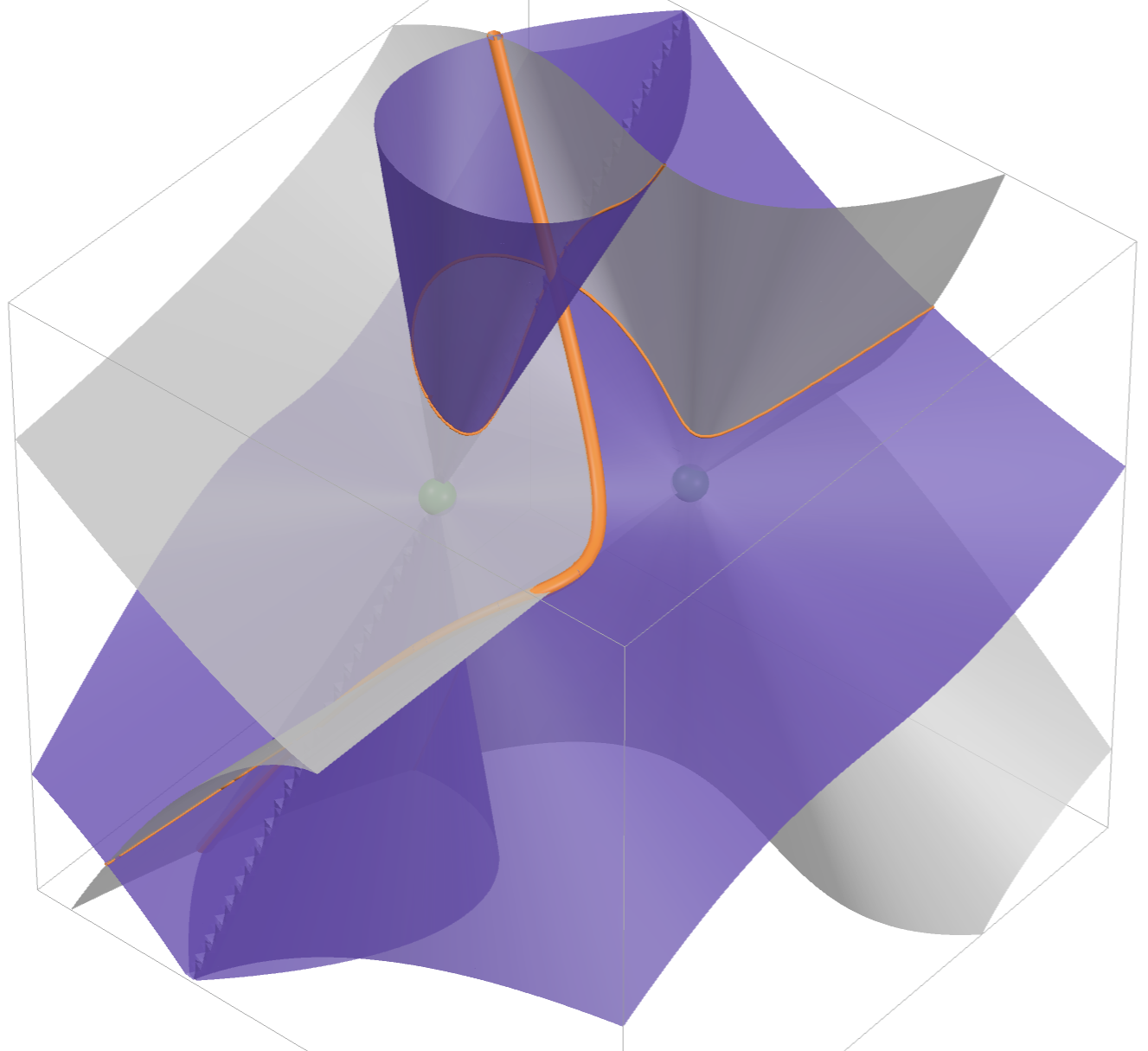}
    \caption{On the left, a twisted cubic is projected onto two camera planes. Depending on the position of the camera, we get different types of singularities in the image curve. On the right, two back-projected cubic cones are shown, through two green centers. The twisted cubic defining the cones is drawn in thick orange. The surfaces also intersect in a degree-6 curve drawn in thin orange. Indeed, since the surfaces are degree-3, the expected total degree of the intersection is 9. Created using \cite{desmos}.}
    \label{fig: twisted}
\end{figure}

\begin{introtheorem}\label{thm: PC MV}  For the camera arrangement $\Ca=(\begin{bmatrix}
    0 & I
\end{bmatrix},\begin{bmatrix}
    I & 0
\end{bmatrix})$, we have that $(\gamma,\delta)\in \mathcal C_{\Ca,2}$ if and only if
\begin{align}\label{eq: two view ideal intro}
     \mathrm{rank} \begin{bmatrix}
        \gamma_4^2-4\gamma_3\gamma_5 & \gamma_2\gamma_4-2\gamma_1\gamma_5 & \gamma_2^2-4\gamma_0\gamma_5\\
        \delta_2^2-4\delta_0\delta_5 & \delta_1\delta_2-2\delta_0\delta_4 & \delta_1^2-4\delta_0\delta_3
    \end{bmatrix}\le 1.
\end{align}
The ideal generated by $\eqref{eq: two view ideal intro}$ is the defining ideal of $\mathcal C_{\Ca,2}$. 
\end{introtheorem}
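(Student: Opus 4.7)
The plan is to translate $(\gamma,\delta)\in\mathcal C_{\Ca,2}$ into the existence of a plane conic common to the two back-projected quadric cones, recognise the entries of the matrix in \eqref{eq: two view ideal intro} as coefficient vectors of binary-quadratic discriminants of those cones, and then close the ideal-theoretic statement by matching dimensions and invoking primality via a fiber-product description.

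First I would invoke the cone description from \Cref{s: Cones}, by which $(\gamma,\delta)\in\mathcal C_{\Ca,2}$ if and only if the two back-projected cones
\begin{align*}
Q_1 &: \gamma_0X_1^2+\gamma_1X_1X_2+\gamma_2X_1X_3+\gamma_3X_2^2+\gamma_4X_2X_3+\gamma_5X_3^2=0,\\
Q_2 &: \delta_0X_0^2+\delta_1X_0X_1+\delta_2X_0X_2+\delta_3X_1^2+\delta_4X_1X_2+\delta_5X_2^2=0
\end{align*}
share a plane conic in $\PP^3$; here $Q_1$ has vertex $c_1=(1{:}0{:}0{:}0)$ and omits all $X_0$-terms, while $Q_2$ has vertex $c_2=(0{:}0{:}0{:}1)$ and omits all $X_3$-terms. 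Viewing $Q_1$ as a quadratic polynomial in $X_3$ over $\CC[X_1,X_2]$ and $Q_2$ as a quadratic in $X_0$ over $\CC[X_1,X_2]$, a direct computation of their discriminants $D_1(X_1,X_2),D_2(X_1,X_2)\in\CC[X_1,X_2]_2$ produces coefficient vectors in the rescaled monomial basis $(X_2^2,\,2X_1X_2,\,X_1^2)$ that are exactly the first and second rows of the matrix in \eqref{eq: two view ideal intro}. Hence the rank-one condition is equivalent to $D_1\propto D_2$ as binary quadratics.

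Next I would establish the geometric heart of the theorem: $Q_1\cap Q_2$ contains a plane conic if and only if $D_1\propto D_2$. The linear projection $\pi:(X_0{:}X_1{:}X_2{:}X_3)\mapsto(X_1{:}X_2)$ realises $Q_1\cap Q_2$ as a degree-$4$ cover of $\PP^1$ admitting a natural isomorphism
\begin{equation*}
Q_1\cap Q_2\;\cong\;\widetilde X\times_{\PP^1}\widetilde Y,
\end{equation*}
where $\widetilde X,\widetilde Y$ are the conics $\gamma,\delta$ viewed as degree-$2$ covers of $\PP^1$ via $(X_1{:}X_2)$, branched along $V(D_1)$ and $V(D_2)$ respectively. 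Since double covers of $\PP^1$ are classified up to isomorphism over $\PP^1$ by their branch divisors, when $V(D_1)\neq V(D_2)$ this fiber product is an irreducible elliptic quartic containing no conic component, whereas when $V(D_1)=V(D_2)$ it decomposes into two components---the diagonal and antidiagonal of $\widetilde X\times_{\PP^1}\widetilde X$---each a section of the first projection isomorphic to $\PP^1$ and of degree $2$ in $\PP^3$, that is, a plane conic. Combined with the discriminant identification of the previous paragraph, this settles the set-theoretic part of the theorem.

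Finally, for the ideal claim, the classical perfection of the $2\times 2$ minor ideal of a $2\times 3$ matrix gives that the variety cut out by \eqref{eq: two view ideal intro} has codimension $2$, hence dimension $10-2=8=\dim\mathcal C_{\Ca,2}$ by \Cref{prop: dim}; combined with the set-theoretic inclusion and the irreducibility of $\mathcal C_{\Ca,2}$ as the image of an irreducible parameter space, this immediately yields set-theoretic equality. The main obstacle is to upgrade this to ideal-theoretic equality, i.e., to show that the three $2\times 2$ minors generate a prime ideal; my plan is to identify the variety with the fiber product $\PP^5_\gamma\times_{\PP^2}\PP^5_\delta$ along the two discriminant morphisms $\psi_1:\gamma\mapsto D_1$ and $\psi_2:\delta\mapsto D_2$, verify that each $\psi_i$ is flat and dominant with geometrically integral generic fibers (the conics tangent to a fixed pair of lines form such a $3$-dimensional irreducible family), and deduce primality from integrality of the fiber product, with a Gröbner basis calculation in a computer algebra system as a fallback.
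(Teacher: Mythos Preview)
Your approach differs substantially from the paper's, and the discriminant interpretation is genuinely illuminating. The paper instead writes the homography relation $\gamma = \nu_2^{2,2}(H_{2,1}(h))^\top \delta$ of \eqref{eq: homog app}, sets $h_0=1$, and eliminates $h_1,h_2,h_3$ in \texttt{Macaulay2}; the output decomposes over $\QQ$ into two prime components, one of which is the ideal generated by \eqref{eq: two view ideal intro}, and a short density argument carries primality to $\CC$. Your route explains \emph{why} the answer is determinantal---the two rows are the discriminants of the back-projected cones as quadratics in $X_3$ and $X_0$, so rank one encodes equality of branch divisors for the projection to $\PP^1_{(X_1:X_2)}$ from the baseline---and the fiber-product description $Q_1\cap Q_2\cong\widetilde X\times_{\PP^1}\widetilde Y$ makes the dichotomy between an irreducible elliptic quartic and a pair of conics transparent.

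There are, however, genuine gaps in the argument as written. First, the opening claim that \Cref{s: Cones} yields ``$(\gamma,\delta)\in\mathcal C_{\Ca,2}$ if and only if the cones share a plane conic'' is not supported by that section (which only introduces back-projected cones and the blowups $\overline{\Gamma}_c$) and is essentially the set-theoretic statement you are trying to prove; only the easy inclusion $\mathcal C_{\Ca,2}\subseteq\{\mathrm{rank}\le 1\}$ is immediate, and it suffices once primality is in hand. Second, ``perfection of the $2\times 2$ minor ideal gives codimension $2$'' has the implication reversed: Eagon--Northcott says that \emph{if} the degeneracy locus has the expected codimension then the minor ideal is Cohen--Macaulay, not conversely. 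You must compute $\dim\{\mathrm{rank}\le 1\}=8$ directly, e.g.\ by checking that the open fiber product has dimension $2+3+3=8$ while each locus $\{\text{row }i=0\}\times\PP^5$ has dimension $2+5=7$. Third, the primality step needs more than ``flat with integral generic fibers'' because the discriminant maps $\psi_i:\PP^5\dashrightarrow\PP^2$ are only rational; a clean completion is: codimension $2$ gives Cohen--Macaulayness (hence $S_1$) by Eagon--Northcott, integrality of the open fiber-product locus gives $R_0$, and Serre's criterion then yields reducedness, with irreducibility following from density of that open locus. Your Gr\"obner fallback is essentially the paper's own computational route.
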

The assumption $\Ca=(\begin{bmatrix}
    0 & I
\end{bmatrix},\begin{bmatrix}
    I & 0
\end{bmatrix})$ is not really a restriction. For a general arrangement, we get the corresponding ideal by making a coordinate change of \eqref{eq: two view ideal}. We also provide necessary and sufficient conditions for the set-theoretic constraints of the conic multiview variety to be as simple as possible in \Cref{thm: main two view}, expressed in terms of the geometry of the camera centers. This extends work on line multiview varieties \cite{breiding2023line,breiding2023LMI}. Finally, in \Cref{s: Tri Complexity}, we motivate the following conjectures through monodromy computations. 
\begin{introconjecture}Given an arrangement of two generic cameras $\Ca$, the Euclidean distance degree of $\mathcal C_{\Ca,2}$ is $538$.  
\end{introconjecture}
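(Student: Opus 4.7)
The plan is to compute the EDD numerically via monodromy and describe how to certify the count rigorously. By \Cref{thm: PC MV}, for the camera arrangement $\Ca = (\begin{bmatrix}0 & I\end{bmatrix},\begin{bmatrix}I & 0\end{bmatrix})$, $\mathcal C_{\Ca,2} \subset \PP^5 \times \PP^5$ is ideal-theoretically the vanishing locus of the three $2\times 2$ minors of the matrix in \eqref{eq: two view ideal intro}. Since EDD is not invariant under arbitrary linear changes of coordinates, the computation must be carried out for a truly generic $\Ca$, obtained from the standard pair by the coordinate change described after the theorem; the resulting defining equations are obtained by substituting the pullbacks of $(\gamma,\delta)$ under this change into \eqref{eq: two view ideal intro}.

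First, I would set up the Lagrange critical equations. Work in standard affine charts of $\PP^5 \times \PP^5$ to obtain affine coordinates $x \in \mathbb C^{10}$, and let $f_1, f_2, f_3$ denote the three $2\times 2$ minors after the coordinate change. For a random target $u \in \mathbb R^{10}$, a smooth point $x$ of $\mathcal C_{\Ca,2}$ is critical for $\|x-u\|^2$ iff $x - u \in \Span\{\nabla f_1(x), \nabla f_2(x), \nabla f_3(x)\}$. This yields a square polynomial system in $x$ and Lagrange multipliers $(\lambda_1,\lambda_2,\lambda_3)$, with the $f_i(x)=0$ constraints together with a rank condition on the Jacobian ensuring that the candidate lies on a codimension-$2$ component and not on the singular locus.

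Second, I would solve this system by monodromy homotopy continuation (for example, via \texttt{HomotopyContinuation.jl}). Starting from an initial seed solution-parameter pair produced by tracking a single path from a start system, I would traverse loops in the parameter space of targets $u$ and collect new solutions until the count stabilizes, conjecturally at $538$. To promote this to a theorem one applies two complementary certifications: a \emph{trace test} to certify completeness of the witness set (hence the upper bound of $538$), and $\alpha$-theory certification of each putative solution (for instance via \texttt{alphaCertified}) to guarantee $538$ distinct true critical points (hence the lower bound).

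The main obstacle is ensuring that the count reflects the true EDD rather than an artifact of the singular locus. The variety $\mathcal C_{\Ca,2}$ is defined by rank-$\le 1$ conditions on a $2\times 3$ matrix and is therefore singular at least along the locus where both rows vanish; any critical point produced by the Lagrange system must be classified so that contributions from singularities are correctly handled in the Draisma et al.\ sense of critical points on the smooth locus. A purely algebraic alternative would be to compute the EDD from the Chern--Mather class of $\mathcal C_{\Ca,2}$, but this would require an explicit stratification of the singular locus together with computations of local Euler obstructions, which appears substantially more demanding than the certified numerical route above.
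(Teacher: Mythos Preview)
First, note that the statement is a \emph{conjecture}: the paper does not prove it and offers only numerical evidence via monodromy in \texttt{HomotopyContinuation.jl}. So there is no proof in the paper to match; there is only a computational setup to compare against, and your proposed certification step (trace test plus $\alpha$-theory) would, if carried out, go beyond what the paper claims.

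Your computational route differs from the paper's in two substantive ways. The paper does \emph{not} work with the implicit minors of \Cref{thm: PC MV}; instead it pulls the ED problem back along the parametrization $\Phi_{\Ca,2}$ to an affine chart of $(\PP^3)^*\times\PP^5\cong\CC^8$, finds $1076$ critical points by monodromy, and divides by $2$ because $\Phi_{\Ca,2}$ is generically $2$-to-$1$. Moreover, the paper does not use generic cameras: it fixes two specialized translation cameras and compensates by computing the \emph{generic} EDD, i.e.\ it inserts generic weights in the objective. Your plan (generic cameras, implicit equations, standard Euclidean metric) is a legitimate alternative, and avoids the $2$-to-$1$ bookkeeping; the parametric approach, on the other hand, sidesteps the need to handle the singular locus of $\mathcal C_{\Ca,2}$ directly.

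There is one genuine gap in your setup. The variety $\mathcal C_{\Ca,2}$ has codimension $2$ in the ambient $\CC^{10}$ but is cut out by three minors $f_1,f_2,f_3$. At any smooth point the gradients $\nabla f_i$ span only a $2$-dimensional space, so the Lagrange system you wrote, with three multipliers $(\lambda_1,\lambda_2,\lambda_3)$, is \emph{not square}: each critical point carries a $1$-parameter family of multipliers and the solution set in $(x,\lambda)$ is positive-dimensional. Monodromy on this system will not terminate with a finite count. You must either replace the multiplier formulation by the rank-drop condition on the augmented Jacobian $\begin{bmatrix} x-u \\ J_f(x)\end{bmatrix}$ (imposed via random linear combinations of maximal minors or a randomized square-down), or work parametrically as the paper does. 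The remark about ``a rank condition on the Jacobian'' does not repair this: adding an extra equation to an already over-determined-in-$\lambda$ system does not make it zero-dimensional.
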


In terms of mathematical analysis of conics and higher dimensional curves in Structure-from-Motion, there are examples both from the 90s \cite{safaee19923d,de1993conics,quan1996conic} and the 00s \cite{kaminski2000calibration,kaminski2004multiple}. As these works come from the computer vision community, they are of a more applied nature than this paper. However, there are several overlaps in ideas. For instance, Quan uses the geometry of back-projected cones and Kaminski and Shashua parametrize curves via their Chow forms. 

As mentioned above, this paper extends the work done on line multiview varieties, which lays the groundwork for this paper. For this reason, we recall their definition and basic properties. Denote by $\Gr(1,\PP^N)$ the Grassmannian of lines in $\PP^N$. For a line $L$ spanned by two points $X,Y\in \PP^3$, the projection $C\cdot L$ with respect to the camera matrix $C$ is the line spanned by $CX,CY\in \PP^2$. In Plücker coordinates, this map is linear and is written $\wedge^2C$. With this notation, we define
\begin{align}\begin{aligned}
    \Upsilon_{\Ca}:\Gr(1,\mathbb P^3) &\dashrightarrow \Gr(1,\mathbb P^2)^n,\\
    L&\mapsto ((\wedge^2C_1) L,\ldots,(\wedge^2C_n)L).
\end{aligned}
\end{align}
The Zariski closure of the image of this map is denoted $\mathcal L_\Ca$ and is called the \textit{line multiview variety}. The line multiview variety is irreducible of dimension 4 as long as $\Ca$ contains at least two cameras of distinct centers. By \cite[Theorem 2.5]{breiding2023line}, $\mathcal L_\Ca$ is cut out by the equations that the back-projected planes meet in a line if and only if all centers are distinct and no four are collinear. The corresponding theorem for conic multiview varieties is \Cref{thm: set-theo}, which uses structured sequences of conics through seven generic lines, rather than lines in smooth quadrics as in the line multiview variety case.

\bigskip

\paragraph{\textbf{Acknowledgements.}} Rydell was supported by the Knut and Alice Wallenberg Foundation within their WASP (Wallenberg AI, Autonomous
Systems and Software Program) AI/Math initiative. The author would like to thank Kathlén Kohn for many helpful comments and discussions.


\section{Preliminaries}\label{s: Pre} 
\setcounter{equation}{0}
\numberwithin{equation}{section} 

We collect the mathematical concepts we use for the convenience of the reader. The reader may choose to skip this section and come back to it as it is used in the other sections. Throughout this paper, we always work over the complex numbers. Let $\PP^N$ denote the $N$-dimensional complex projective space. A camera $C:\PP^3\dashrightarrow\PP^2$ is a full rank linear mapping. It is well-defined away from its \textit{center} $c:=\ker P$. Let $(\PP^N)^*$ denote the \textit{dual} of $\PP^N$, whose elements $h$ represent hyperplanes $\{X:h^TX=0\}$ of $\PP^N$. Naturally, $(\PP^N)^*\cong \PP^N$, although a distinction between these spaces is often made for clarity.


\subsection{Plücker and Veronese embeddings}\label{ss: prod} Every $k$-dimensional subspace of $\PP^N$ is represented by a point of the \textit{Grassmannian} $\Gr(k,\PP^N)$ via the Plücker embedding \cite[Section 8]{Gathmann}. Here, we restrict to the Grassmannian $\Gr(1,\PP^3)$. If $L$ is spanned by $X_0,X_1$, then the image of $L$ under the \textit{Plücker} embedding is the vector $\pl(X_0,X_1)$ of all $6$ many $2\times 4$ minors of 
\begin{align}\label{eq: rows of X}
    \begin{bmatrix}
        X_0 & X_1
    \end{bmatrix}^\top.
\end{align}
Letting $[ij]$ denote the minor involving columns $i$ and $j$, we choose the order of these 6 minors to be $[12],[13],[14],[23],[24],[34]$. The rational map $\pl: \left(\PP^3\right)^{2} \dashrightarrow \PP^{5}$ is projectively well-defined precisely when the above matrix is full rank. The image of $\pl$ is $\Gr(1,\PP^3)$. Note that to each line $L$ spanned by $X_0,X_1$, there is a ``dual'' line spanned by the kernel of $\begin{bmatrix}
        X_0 & X_1
    \end{bmatrix}^T$. More precisely, there are linearly independent hyperplanes $h_0,h_1\in (\PP^3)^*$ whose common zero locus is $L$. The Plücker embedding of $h_i$ uniquely defines $L$, and lives in $\Gr(1,(\PP^3)^*)$. We demonstrate the explicit isomorphism for $\Gr(1,\PP^3)\cong \Gr(1,(\PP^3)^*)$ as we make us of it later. Assume that the left most $2\times 2$ of $\begin{bmatrix}
    X_0 & X_1
\end{bmatrix}^\top$ is non-zero. Then up to multiplication of an invertible matrix on the left, we may assume that $\begin{bmatrix}
    X_0 & X_1
\end{bmatrix}^\top$ equals $\begin{bmatrix}
    I & A
\end{bmatrix}$ for some $2\times 2$ matrix $A$, and this action does not affect the Plücker embedding. We now get an expression for the kernel of this matrix, since  
\begin{align}
    \begin{bmatrix}
        I & A
    \end{bmatrix}  \begin{bmatrix}
        -A \\ I
    \end{bmatrix}=0.
\end{align}
Writing out the Plücker coordinates of $ \begin{bmatrix}
        I & A
    \end{bmatrix} $ and  $\begin{bmatrix}
        - A^T& I
    \end{bmatrix}$, we find that the signed permutation     \begin{align}\begin{aligned}\label{eq: primal dual}
        L\mapsto \underbrace{\left[\begin{smallmatrix}
          0 & 0 & 0 & 0 & 0 & 1\\
          0 & 0 & 0 & 0 & -1 & 0\\
          0 & 0 & 0 & 1 & 0 & 0\\
          0 & 0 & 1 & 0 & 0 & 0\\
          0 & -1 & 0 & 0 & 0 & 0\\
          1 & 0 & 0 & 0 & 0 & 0\\
        \end{smallmatrix}\right]}_{\Sigma:=}L,
    \end{aligned}
    \end{align} 
sends a line $L$ in coordinates of $\Gr(1,\PP^3)$ to the line defined by the kernel of its spanning vectors in $\Gr(1,(\PP^3)^*)$. This map extends to all lines $L\in \Gr(1,\PP^3)$.

Below we use $\Sigma$ in order to simplify matrix expressions. In order to do so, we must introduce further notation. 
Let $M$ be a $4\times 4$ matrix and let $L$ be spanned by $X_0,X_1$. We define $M\cdot L$ to be the line spanned by $MX_0,MX_1$. There is a $6\times 6$ matrix which we call $\wedge^{2} M$ with the property that
\begin{align}
\pl(MX_0,MX_1)=\wedge^{2} M\;\pl(X_0,X_1). 
\end{align}
By construction, $\wedge^{2} I=I$ and for two matrices $M$ and $M'$, $\wedge^{2}(MM')=\wedge^{2}M \; \wedge^{2}M'$. As a consequence, if $M$ is invertible, then $=(\wedge^{2}M)^{-1}$ equals $\wedge^{2} (M^{-1})$. With this in mind, let $M\in \CC^{4\times 4}$ be a full rank matrix. For $L$ as above, assume that that the hyperplanes $h_0,h_1\in (\PP^3)^*$ cut out this line. Then $M^{-\top}h_0,M^{-\top}h_1$ are hyperplanes defining the line spanned by $MX_0,MX_1$ and we conclude that 
\begin{align}
  \wedge^2 M^{-\top}\; \pl(h_0,h_1)=\Sigma \wedge^2 M\; \pl(X_0,X_1).
\end{align}
Since $\pl(h_0,h_1)=\Sigma \;\pl(X_0,X_1)$, and the fact that the image of $\pl(X_0,X_1)$ spans $\PP^5$, we get projectively that
\begin{align}\label{eq: wedge id}
     \wedge^2 M =\Sigma (\wedge^2 (M^\top)^{-1})\Sigma.
\end{align}
The point is that on the right-hand-side we have an equation that is seemingly degree-6 in the entries of $M$, as the inverse of $4\times 4$ matrices is a degree-3 map and $\wedge^2$ is a degree-2 map. However, by \eqref{eq: wedge id}, this expression is projectively degree-2.

Let $\mathrm{Sym}_d(\CC^{N+1})$ denote the space of symmetric $(N+1)\times \cdots\times (N+1)$ tensors. The set of symmetric tensors is a vector space of dimension ${d+N\choose N}$ and its projectivization is written $\PP(\mathrm{Sym}_d(\CC^{N+1}))$. We define the \textit{Veronese embedding}
\begin{align}\begin{aligned}\label{eq: Veronese}
    \nu_{d}^{N}: \PP^{N}&\to \PP (\mathrm{Sym}_d(\CC^{N+1}))\cong \PP^{{d+N\choose N}-1},\\
    X& \mapsto  \otimes^{d} X:= \Big(X_{i_1}\cdots X_{i_{d}}\Big)_{i_1,\ldots,i_{d}=1}^{N+1}.
\end{aligned}
\end{align}
This map is injective and can be viewed as degree-$d$ morphism from $\PP^N$ to $\PP^{{d+N\choose N}-1}$. In the special case $d=2$, it sends $X\in \PP^N$ to the symmetric matrix $XX^\top$.

Given a matrix $A\in \PP(\CC^{(h+1)\times(N+1)})$, we consider the expression $\nu_{d}^{h}(AX)$. Note that any expression on the form $(AX)_{j_1}\cdots (AX)_{j_{d}}$ is a linear combination of the entries of $\otimes^d X$. Then, from \eqref{eq: Veronese}, it is clear that there is a ${d+h\choose h}\times {d+N\choose N}$ matrix which we  $\nu_{d}^{h,N}(A)$ such that 
\begin{align}
    \nu_{d}^h(AX)=\nu_{d}^{h,N}(A)\nu_{d}^N(X).
\end{align}
If $A'A$ is a product of a $\CC^{(h'+1)\times (h+1)}$ matrix and a $\CC^{(h+1)\times (N+1)}$ matrix, we have $\nu_{d}^{h',N}(A'A)=\nu_{d}^{h',h}(A')\nu_{d}^{h,N}(A)$. It follows that $\nu_d^{N,N}(A^{-1})=\nu_d^{N,N}(A)^{-1}$ and $\nu_d^{N,h}(A^\dagger)$ is a left (respectively right) pseudo-inverse of $\nu_d(A)^{h,N}$, where $A^\dagger$ is a left (respectively right) pseudo-inverse of $A$. In this paper, a left (respectively right) pseudo-inverse of $A$ is any matrix $A^\dagger$ such that $A^\dagger A=I$ (respectively $AA^\dagger=I$).



\subsection{Chow embeddings}\label{ss: Chow} A \textit{moduli space} is a geometric space whose points represent varieties. For instance, each point of the \textit{Grassmannian} $\Gr(k,\PP^N)$ represents a $k$-dimensional subspace of $\PP^N$. Chow varieties are generalizations of this construction that captures non-linear varieties. To each variety $\mathcal N$ in $\PP^N$ of dimension $m$, the variety of $(N-m-1)$-dimensional subspaces of $\PP^N$ that meet $\mathcal N$ in a point is a hypersurface of $\Gr(N-m-1,\PP^N)$ \cite{dalbec1995introduction}. As described in  \cite[Chapter 3, Section 2]{gelfand1994discriminants}, Grassmannians have the desired property that any hypersurface in them is cut out by exactly one equation, known as a \textit{Chow form}, in addition to those that define the Grassmannian itself. We write $\mathrm{Ch}_{\mathrm{plane}}(d,\PP^3)\subseteq \PP^{{d+5\choose 5}-1}$ for the planar degree-$d$ curves in $\PP^3$, represented via their Chow forms, and we always assume that $d\ge 2$. 

Degree-$d$ plane curves are parametrized by $\alpha\in\PP^{{d+2\choose 2}-1}$ via $K_\alpha:=\{x\in \PP^2:\alpha^\top \nu_d^2(x)=0\}$. In order to parametrize all plane curves in $\PP^3$, we proceed as follows. Let $h$ be an element of the dual $(\PP^3)^*$. Consider the two linear maps
\begin{align}
    H(h):=\begin{bmatrix}
     -h_1 & -h_2 & -h_3 \\
     h_0 & 0 & 0 \\
     0 & h_0 & 0 \\
    0 & 0 & h_0 \\
\end{bmatrix},\quad  H_1(h):=\begin{bmatrix}
     h_0 &-h_1 & -h_2 & -h_3 \\
     0&h_0 & 0 & 0 \\
     0&0 & h_0 & 0 \\
    0& 0 & 0 & h_0 \\
\end{bmatrix}.
\end{align}
Under the condition that $h_0\neq 0$, $H_1(h)$ is invertible and the image of $H(h)$ is the hyperplane defined by $h$. Degree-$d$ plane curves in the plane defined by $h$ are images of degree-$d$ plane curves in $\PP^2$ under $H(h)$. For $\alpha \in\PP^{{d+2\choose 2}-1} $, define 
\begin{align}\begin{aligned}\label{eq: ch plane}
    \ell_{h,\alpha}:\PP^3\times K_\alpha&\dashrightarrow \Gr(1,\PP^3),\\
    (a,x)&\mapsto \pl(a, H_1(h)(0;x))=\wedge^2H_1(h)\;\pl(H(h)^{-1}a,(0;x)),
\end{aligned}
\end{align}
where the equality comes from \Cref{ss: prod}. The image of this map is by construction the set of lines through the image of $C_\alpha$ under $H(h)$. We first compute the Chow form of planar curves in the plane $h=e:=(1:0:0:0)$. For this hyperplane, we have 
\begin{align}
    \ell_{e,\alpha}(a,x)=(ax_0 : ax_1 : ax_2: bx_1-cx_0 : bx_2-dx_0:cx_2-dx_1).
\end{align}
Writing $L=\ell_{e,\alpha}(a,x)$, we see that $L_2/L_0=x_2/x_0, L_1/L_0=x_1/x_0$ and $\alpha^\top\nu_d^2(x_0:x_1:x_2)=\alpha^\top\nu_d^2(L_0:L_1:L_2)$. Then a line $L$ in Plücker coordinates meets $K_\alpha$ if and only if $\alpha^\top\nu_d^2(L_{[0,1,2]})=0$, where $L_{[0,1,2]}:=(L_0:L_1:L_2)$. From this, one can find $\beta\in \PP^{{d+5\choose 5}-1}$ such that $\beta^\top \nu_d^{5}(L)=0$ and this is the Chow form of $(h,\alpha)$. For a general $h$, we the Chow form as follows. We use \eqref{eq: ch plane} to see $L$ meets the curve $H(h)\cdot K_\alpha$ if and only if 
\begin{align}\label{eq: Chow first}
    \alpha^\top \nu_d^2\Big(\big((\wedge^2H_1(h))^{-1}L\big)_{[0,1,2]}\Big)=0.
\end{align}
Let $\hat{H}(h)$ be the first three rows of $(\wedge^2H_1(h))^{-1}$. Then $\big((\wedge^2H_1(h))^{-1}L\big)_{[0,1,2]}$ equals $\hat{H}(h)L$, and we calculate that 
\begin{align}\begin{aligned}
\hat{H}(h)= \begin{bmatrix}
    h_0 & 0 & 0 &  -h_2 & -h_3 & 0 \\
0&h_0&0& h_1  & 0 & -h_3 \\
0&0&h_0& 0 & h_1 & h_2 \\
\end{bmatrix}.
\end{aligned} 
\end{align}
As a consequence, \eqref{eq: Chow first} becomes $(\nu_d^{2,5}(\hat{H}(h))\alpha)^\top \nu_d^5(L)=0$, and we identify $\beta=\nu_d^{2,5}(\hat{H}(h))\alpha$ as the Chow form of the planar curve $H(h)\cdot K_\alpha$. In summary, we have established the following \textit{Chow embedding} that sends a hyperplane and a planar curve in $\PP^2$ to the associated Chow form,
\begin{align}\begin{aligned}
    \iota_d: (\PP^3)^*\times \PP^{{d+2\choose 2}-1} &\dashrightarrow \mathrm{Ch}_{\mathrm{plane}}(d,\PP^3),\\
    (h,\alpha)&\mapsto \nu_d^{2,5}(\hat{H}(h))^\top\alpha.
\end{aligned}
\end{align}
\begin{remark} 
The map $\iota_d$ is linear in the second factor $\alpha$. From the definition of $\hat{H}(h)$ and the Veronese embedding, the degree in the first factor is at most $d$. To prove that the degree is $d$, one must show that there is no cancellation of terms in $\nu_d^{2,5}(\hat{H}(h))$ that would make the degree less. Let $L_1=(1:0:\cdots:0)\in \PP^5$ and $L_2=(0:\cdots:1)\in \PP^5$. We see that
\begin{align}
    \hat{H}(h)L_1=\begin{bmatrix}
        h_0 \\ 0 \\0
    \end{bmatrix},\quad \hat{H}(h)L_2=\begin{bmatrix}
        0 \\ -h_3 \\h_2
    \end{bmatrix}.
\end{align}
The Veronese embedding of these vectors are degree-$d$ in $h$ and share no factors. By the identity $\nu_d^2(\hat{H}(h)L_i) =\nu_d^{2,5}(\hat{H}(h))\nu_d^2(L_i)$, this suffices.     
\end{remark}



\subsection{Cubic curves}\label{ss: cubic} Cubic curves in space are well-understood, and their study goes far back \cite{piene1985hilbert}. First, we recall the \textit{standard twisted cubic curve}, which is the image of the morphism 
\begin{align}\begin{aligned}\label{eq: std twisted}
   \varphi: \PP^1&\to \PP^3,\\
    (s:t)&\mapsto (s^3:s^2t:st^2:t^3).
\end{aligned}
\end{align}
This irreducible degree-$3$ curve is given by the three equations
\begin{align}
    \mathrm{rank}\begin{bmatrix}
        X_0 & X_1 & X_2\\ X_1 & X_2 & X_3
    \end{bmatrix}\le 1.
\end{align}
The Chow form of this curve given in ``dual'' coordinates in \cite[Section 1]{dalbec1995introduction}. After permuting the coordinates, the associated hypersurface is defined by
\begin{align}\label{eq: twist cubic Chow}
    -L_3^3-L_3^2L_2+2L_4L_3L_1-L_5L_1^2-L_4^2L_0+L_5L_3L_0+L_5L_2L_0=0.
\end{align}
Denote by $\omega^{\mathrm{tw}}$ the vector such that \eqref{eq: twist cubic Chow} becomes $(\omega^{\mathrm{tw}})^\top\nu_3^5(L)=0$.

For the purposes of this paper, we say that a \textit{twisted cubic} is the transformation of the standard twisted cubic given a full rank $\PP(\CC^{4\times 4})$ matrix $A$. We denote by $\mathrm{Ch}_{\mathrm{tw}}(\PP^3)\subseteq \PP^{55}$ the (closure of the) set of twisted cubics, represented as Chow forms. It is well-known that any non-planar degree-3 curve is a twisted cubic, as demonstrated below.

\begin{proposition}\label{prop: planar or twisted} An irreducible cubic curve in $\PP^N$ is either planar or is a twisted cubic. 
\end{proposition}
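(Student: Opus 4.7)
The plan is to reduce to cubic curves in $\PP^3$ and then classify the non-degenerate ones.

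Let $C\subset\PP^N$ be an irreducible cubic curve and let $V=\langle C\rangle=\PP^k$ denote its projective span. An irreducible curve contained in a line would have degree $1$, so $k\ge 2$; and if $k=2$ then $C$ is planar and we are done. The first step is to establish $k\le 3$, which follows from the classical bound that a non-degenerate irreducible curve of degree $d$ in $\PP^k$ satisfies $d\ge k$, a consequence of the fact that a general hyperplane section of a non-degenerate curve spans the hyperplane. This reduces the problem to the case where $C\subset\PP^3$ is non-degenerate.

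Next I would show that such a $C$ is smooth. Suppose, for contradiction, that $p\in C$ is a singular point of multiplicity $m_p\ge 2$. Projection from $p$ gives a rational map $\pi_p:C\dashrightarrow\PP^2$, and a general plane through $p$ meets $C$ with multiplicity $m_p$ at $p$ together with $3-m_p$ further points, so $\deg\pi_p\cdot\deg\pi_p(C)=3-m_p$. If $m_p=3$, every plane through $p$ would meet $C$ set-theoretically only at $p$, forcing $C$ to lie on a line and contradicting $\deg C=3$. Thus $m_p=2$ and $\pi_p(C)$ is a line $\ell\subset\PP^2$; but then $C$ lies in the plane $\pi_p^{-1}(\ell)\cup\{p\}$, contradicting non-degeneracy.

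Having reduced to $C$ smooth and non-degenerate in $\PP^3$, I would apply Riemann--Roch. Non-degeneracy means that the restriction map $H^0(\PP^3,\mathcal{O}(1))\to H^0(C,\mathcal{O}_C(1))$ is injective, so $h^0(\mathcal{O}_C(1))\ge 4$; since $\deg\mathcal{O}_C(1)=3$, the identity $h^0-h^1=4-g$ forces $g=0$ and hence $C\cong\PP^1$. Under this isomorphism $\mathcal{O}_C(1)$ becomes $\mathcal{O}_{\PP^1}(3)$, and the embedding $C\hookrightarrow\PP^3$ is given by the complete linear system $|\mathcal{O}_{\PP^1}(3)|$ (since $h^0=4$). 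Any basis of $H^0(\mathcal{O}_{\PP^1}(3))$ yields, up to an invertible linear change of coordinates on $\PP^3$, the standard twisted cubic $(s:t)\mapsto(s^3:s^2t:st^2:t^3)$ from \eqref{eq: std twisted}. By the paper's definition of a twisted cubic as the transformation of the standard one by a full-rank $4\times 4$ matrix, this completes the proof.

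The main obstacle is the smoothness step. Justifying the projection formula $\deg\pi_p\cdot\deg\pi_p(C)=3-m_p$ rigorously requires either a local analysis of the tangent cone at $p$ or an intersection-theoretic argument. An alternative route is to bypass smoothness by comparing Hilbert polynomials: a Cohen--Macaulay cubic curve non-degenerate in $\PP^3$ has arithmetic genus $0$, which already rules out singularities since any singularity on an irreducible curve would strictly lower the geometric genus below the arithmetic one.
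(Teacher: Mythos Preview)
Your argument is correct and takes a more self-contained route than the paper. The paper's proof is really a citation: it invokes \cite{eisenbud1987varieties} for the bound on the span, for smoothness (via the ``cone over a smooth such variety'' statement), and for the resulting parametrization by $\PP^1$. You prove each of these directly: the span bound via $d\ge k$ for non-degenerate curves, smoothness via projection from a hypothetical singular point, and rationality plus the complete linear series via Riemann--Roch. The payoff of your approach is that nothing is black-boxed; the payoff of the paper's is brevity, since Eisenbud--Harris handles all varieties of minimal degree uniformly.

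One caveat on your closing remark. The alternative you sketch---asserting that a non-degenerate Cohen--Macaulay cubic in $\PP^3$ has arithmetic genus $0$ and then deducing smoothness from $p_g\le p_a$---is not as immediate as you suggest. The arithmetic genus is not determined by degree and ambient dimension alone (a plane cubic has $p_a=1$), so one still has to use non-degeneracy, and the cleanest way to do that is essentially the Riemann--Roch/Clifford-type bound you already used in the main argument. So the ``alternative'' does not really bypass the work; your projection argument for smoothness is the honest step, and it is fine as stated.
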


\begin{proof}[Sketch of proof] This follows from \cite{eisenbud1987varieties}. Indeed, \cite[Proposition 0]{eisenbud1987varieties} says that a degree-3 curve lives in a 3-dimensional subspace $\PP^3$. \cite[Theorem 1]{eisenbud1987varieties} says that an irreducible non-planar cubic curve in $\PP^3$ is the ``cone over a smooth such variety'', which in this case means that the curve itself must be smooth (see the paper for details). It then follows by \cite[Theorem 1]{eisenbud1987varieties} that the curve is the image of a map
\begin{align}
    \PP^1\dashrightarrow \PP^3,\quad (s:t)\mapsto M\varphi(s:t),
\end{align}
for a $4\times 4$ matrix $M$. If the curve is non-planar, then $M$ must be full rank, meaning the image is a twisted cubic.
\end{proof}

Even though $\dim \PP(\CC^{4\times 4})=15$, the set of twisted cubics is only 12-dimensional. This is because we can precompose $\varphi$ by a full rank $\PP(\CC^{2\times 2})$ matrix $A=\begin{bmatrix}
    a & b \\ c & d
\end{bmatrix}$ and get the same twisted cubic, and $\dim \PP(\CC^{2\times 2})=3$. More specifically, 
\begin{align}\label{eq: PGL1}
   \varphi\circ \begin{bmatrix}
       a & b \\ c & d
   \end{bmatrix} =\begin{bmatrix}
    a^3 & 3a^2b & 3ab^2 & b^3 \\
    a^2c & 2abc+a^2d & 2abd+b^2c & b^2d \\
    ac^2 & 2cda+c^2b & 2cdb+d^2a & d^2b \\
    c^3 & 3c^2d & 3cd^2 & d^3
\end{bmatrix}\circ \varphi.
\end{align}
Given $A\in \PP(\CC^{2\times 2})$, we write $\rho(A)$ for the $4\times 4$ matrix of \eqref{eq: PGL1}. A calculation shows that $\det \rho(A)=\det(A)^6$.

In order to describe the rational map from $\PP(\CC^{4\times 4})$ to the set of twisted cubics $\mathcal T$, we define 
\begin{align}\begin{aligned}
    \ell_M:\PP^{3}\times \PP^1&\dashrightarrow \Gr(1,\PP^3)\\
    (a,(s:t))&\mapsto \pl(a,M\varphi(s:t))=\wedge^2M\;\pl(M^{-1}a,\varphi(s:t)),\end{aligned}
\end{align}
given a full rank matrix $M$. Setting $M=I$, the image of $\ell_I$ is precisely the set of lines meeting the standard twisted cubic. In other words, the image of $\ell_I$ is the set of lines $L$ such that $(\omega^{\mathrm{tw}})^\top\nu_3^5(L)=0$. Observing that the image of $\ell_M$ differs from the image of $\ell_I$ by multiplication of $\wedge^2M$, we see that the image of $\ell_M$ is the set of lines $L$ satisfying
\begin{align}
(\omega^{\mathrm{tw}})^\top\nu_3^5\big((\wedge^2M)^{-1}\;L\big)=0.
\end{align}
We use \Cref{ss: prod} and in particular \eqref{eq: wedge id} to rewrite this expression as 
\begin{align}
    \big(\nu_3^{5,5}(\Sigma(\wedge^2M^\top) \Sigma)^\top\omega^{\mathrm{tw}}\big)^\top \nu_3^5(L)=0,
\end{align}
giving a Chow embedding
\begin{align}\begin{aligned}\label{eq: pre phi tw}
    \PP(\CC^{4\times 4})&\dashrightarrow \mathrm{Ch}_{\mathrm{tw}}(\PP^3),\\
    M&\mapsto \nu_3^{5,5}(\Sigma\wedge^2(M^\top) \Sigma)^\top\omega^{\mathrm{tw}}.
\end{aligned}
\end{align}
This map sends full rank $4\times 4$ matrix to the Chow form of the associated twisted cubic.

\begin{remark} Although the matrix $\nu_3^{5,5}(\Sigma\wedge^2(M^\top) \Sigma)^\top$ defining the map \eqref{eq: pre phi tw} is of size $56\times 56$, $\omega^{\mathrm{tw}}$ is a sparse vector with only 7 non-zero elements, and so we may for practical purposes view this as a $56\times 7$ matrix.
\end{remark}

\begin{lemma}\label{le: PGL} Two full rank matrices $M,M'\in \PP(\CC^{4\times 4})$ represent the same twisted cubic if and only if they differ by some $\rho(A)$. In particular, the set of twisted cubics $\mathrm{Ch}_{\mathrm{tw}}(\PP^3)$ is 12-dimensional.
\end{lemma}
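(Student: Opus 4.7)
I would prove the two directions separately and then deduce the dimension count as a corollary.

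For the ``if'' direction, suppose $M' = M\rho(A)$ projectively for some full rank $A\in \PP(\CC^{2\times 2})$. Using \eqref{eq: PGL1}, we have $M'\varphi(s:t) = M\rho(A)\varphi(s:t) = M\varphi(A(s:t))$ for every $(s:t)\in\PP^1$. Since $A$ is invertible, it induces an automorphism of $\PP^1$, so the images of $M\varphi$ and $M'\varphi$ coincide, meaning $M$ and $M'$ define the same twisted cubic.

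For the ``only if'' direction, suppose $M$ and $M'$ parametrize the same twisted cubic $\mathcal{T}\subset \PP^3$. Since both $M$ and $M'$ are full rank, the maps $M\varphi,\, M'\varphi : \PP^1 \to \mathcal{T}$ are birational morphisms, so $(M\varphi)^{-1} \circ (M'\varphi)$ is an automorphism of $\PP^1$, i.e., it arises from a matrix $A \in \PP(\CC^{2\times 2})$. Therefore $M'\varphi(s:t) = M\varphi(A(s:t)) = M\rho(A)\varphi(s:t)$ for all $(s:t)$. The standard twisted cubic spans $\PP^3$ (one checks that e.g.\ $\varphi(1:0),\varphi(0:1),\varphi(1:1),\varphi(1:-1)$ are linearly independent), so any $4\times 4$ matrix $N$ with $N\varphi \equiv 0$ is zero; applied to $N = M' - \lambda M\rho(A)$ for a suitable scalar $\lambda$, this yields $M' = M\rho(A)$ projectively.

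For the dimension claim, I would first verify that the induced map $\rho : \PP(\CC^{2\times 2}) \dashrightarrow \PP(\CC^{4\times 4})$ is injective on the locus of full rank matrices. Indeed, if $\rho(A) = \lambda I$ then reading off the off-diagonal entries of the matrix in \eqref{eq: PGL1} forces $b=c=0$, and comparing diagonal entries then gives $a=d$, so $A$ is a scalar multiple of $I$, i.e., the identity in $\PP(\CC^{2\times 2})$. Combined with the first part, this shows that the Chow embedding $\PP(\CC^{4\times 4}) \dashrightarrow \mathrm{Ch}_{\mathrm{tw}}(\PP^3)$ from \eqref{eq: pre phi tw} has generic fiber equal to the $3$-dimensional orbit $\{M\rho(A) : A \in \PP(\CC^{2\times 2})\}$. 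Since $\dim \PP(\CC^{4\times 4}) = 15$, the image $\mathrm{Ch}_{\mathrm{tw}}(\PP^3)$ has dimension $15-3 = 12$.

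The main subtlety is the middle step: arguing cleanly that two parametrizations $\PP^1 \to \mathcal{T}$ of the same twisted cubic differ by an automorphism of $\PP^1$. One could invoke the fact that $\mathcal{T}$ is a smooth rational curve of degree $3$ and that birational morphisms between smooth projective curves are isomorphisms, but a self-contained alternative is to note that $\varphi : \PP^1 \to \mathcal{T}_{\mathrm{std}}$ is an isomorphism onto the standard twisted cubic (it has an explicit rational inverse $(X_0:X_1:X_2:X_3) \mapsto (X_0:X_1)$ on the open set where $X_0\ne 0$), whence $(M\varphi)^{-1}(M'\varphi)$ is a rational self-map of $\PP^1$ of degree one, hence a Möbius transformation.
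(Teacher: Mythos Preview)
Your proof is correct and follows essentially the same route as the paper: both argue that $(M\varphi)^{-1}\circ(M'\varphi)$ is an automorphism of $\PP^1$, hence given by some $A$, and then invoke the fiber dimension theorem to get $\dim \mathrm{Ch}_{\mathrm{tw}}(\PP^3)=15-3=12$. You simply supply more detail than the paper does (the explicit ``if'' direction, the spanning argument to pass from $M'\varphi=M\rho(A)\varphi$ to $M'=M\rho(A)$ projectively, and the injectivity check for $\rho$), all of which is fine.
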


\begin{proof} 
Given two full rank matrices $M_1,M_2$, consider the maps $\varphi_i:=M_i\circ \varphi$. These are isomorphisms onto their images. If the image curves coincide, then $\varphi_1^{-1}\circ\varphi_2:\PP^1\to \PP^1$ is an isomorphism. However, the only isomorphism $\PP^1$ to itself are given by full rank $2\times 2$ matrices, meaning that $\varphi_2=\varphi_1\circ A=\rho(A)\circ \varphi_1$ for some $A$. Then the fibers of the rational map \eqref{eq: pre phi tw} are 3-dimensional fibers, i.e. the image is 12 dimensional by the fiber dimension theorem \cite[Chapter 1, Section 8]{mumford1999red}. 
\end{proof}

\subsection{Epipolar geometry}\label{ss: epip geo} For the projection of points from $\PP^3$ to $\PP^2$, epipolar geometry forms the basis of two-view geometry in computer vision \cite{Hartley2004}. At the core of this geometry lies fundamental matrices and epipoles. In order to define these concepts, let $C_1$ and $C_2$ denote our two cameras. For $y_1,y_2\in \PP^3$, the \textit{fundamental matrix} $F^{12}$ is the bilinear form
\begin{align}\label{eq: det fund}
    \det \begin{bmatrix}
        C_1 & y_1 & 0 \\
        C_2 & 0 & y_2
    \end{bmatrix}=0,
\end{align}
i.e. it is the $3\times 3$ matrix (defined up to scaling) with the property that \eqref{eq: det fund} is zero if and only if $y_1^\top F^{12}y_2$ is. The fundamental matrix encodes the image pairs that appear given the arrangement $\Ca=(C_1,C_2)$. Indeed, $\{(y_1,y_2):y_1^\top F^{12}y_2=0\}$ is the (point) multiview variety given $\Ca$, the closure of the image of $\Phi_\Ca$ as defined in \eqref{eq: point proj}. The fundamental matrix of $C_1H$ and $C_2H$ is the same as that of $C_1$ and $C_2$. Assuming the centers are distinct, $F^{12}$ is $3\times 3$ rank-2, and the kernels $e_2^1:=\ker \;F^{12}$ and $e_1^2:=\ker\;F^{21}$, where $F^{21}:=(F^{12})^\top $ are the \textit{epipoles} of $F^{12}$. The epipole $e_1^2$ is the image of the center $c_2$ of $C_2$ with respect to $C_1$, and $e_2^1$ is the image of the center $c_1$ of $C_1$ with respect to $C_2$. Given any $3\times 3$ rank-2 matrix $F^{12}$, there is a pair of camera $C_1$ and $C_2$ such that $F^{12}$ is their fundamental matrix. Such a pair is called a \textit{solution} of $F^{12}$. Motivated by this, we say that any $3\times 3$ rank-2 matrix is a \textit{fundamental matrix}. 

In \Cref{s: Conic MV}, we find it convenient to work with cameras of particular forms. The next result shows that given a fundamental matrix $F^{12}$, we can write a solution of cameras $C_1$ and $C_2$ as a function of $F^{12}$ such that the first column of $C_1$ and the last column of $C_2$ is zero. In the result below, the notation $A_{I}$ is the matrix we get by keeping the columns of $A$ listed in $I$.  

\begin{proposition}\label{prop: std sol} Let $F^{12}$ be a fundamental matrix. There is at least one index $i$ such that $(e_2^1)_i\neq 0$. In each case, the following is a solution of cameras:
\begin{align}\begin{aligned}\label{eq: std form}
    i=0\mathrm{)} &\quad C_1= \begin{bmatrix} 0 &
     ([e_1^2]_\times F^{12})_{[2,3]} & e_1^2
\end{bmatrix}, \quad C_2=\left[\begin{smallmatrix}
     & 0 & 0 & 0 \\
   e_2^1 & 1 & 0 & 0 \\
    & 0 & 1 & 0
\end{smallmatrix}\right],\\
i=1\mathrm{)} &\quad  C_1= \begin{bmatrix} 0 &
     ([e_1^2]_\times F^{12})_{[1,3]} & e_1^2
\end{bmatrix}, \quad C_2=\left[\begin{smallmatrix}
    1 &  & 0 & 0 \\
   0 & e_2^1 & 0 & 0 \\
   0 &  & 1 & 0
\end{smallmatrix}\right],\\
i=2\mathrm{)}&\quad C_1= \begin{bmatrix} 0 &
     ([e_1^2]_\times F^{12})_{[1,2]} & e_1^2
\end{bmatrix}, \quad C_2=\left[\begin{smallmatrix}
    1 & 0 &  & 0 \\
   0 & 1 & e_2^1 & 0 \\
   0 & 0 &  & 0
\end{smallmatrix}\right].
\end{aligned}
\end{align}
\end{proposition}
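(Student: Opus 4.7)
My plan is to directly verify that the camera pair $(C_1, C_2)$ exhibited in \eqref{eq: std form} has fundamental matrix $F^{12}$ in each of the three cases. The existence of an index $i$ with $(e_2^1)_i \neq 0$ is immediate, since $e_2^1 = \ker F^{12}$ is non-zero. For the second assertion, the strategy is to check two things: (i) $C_1$ and $C_2$ are rank-$3$ matrices with distinct centers, and (ii) $y_1^\top F^{12} y_2 = 0$ for every pair $(y_1, y_2) = (C_1 X, C_2 X)$ with $X \in \PP^3$. Granted (i) and (ii), the characterization of the fundamental matrix of a pair of cameras with distinct centers as the (up to scale) unique rank-$2$ bilinear form vanishing on the joint image identifies $F^{12}$ as a fundamental matrix of $(C_1, C_2)$. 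By the symmetry between the three cases (permutation of coordinates of $\PP^2$), it suffices to treat $i = 0$.

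For (i) in case $i = 0$, the fourth column of $C_2$ is zero, so $c_2 = (0,0,0,1)^\top$, and the remaining $3 \times 3$ block has determinant $(e_2^1)_0 \neq 0$, giving rank $3$. For $C_1$, I write $M = [e_1^2]_\times F^{12}$ with columns $v_1, v_2, v_3$. The identity $F^{12} e_2^1 = 0$ gives $\sum_j (e_2^1)_j v_j = 0$, and $(e_2^1)_0 \neq 0$ implies $v_1 \in \Span(v_2, v_3)$; moreover, generically $v_2, v_3$ span $\Span(e_1^2)^\perp$, the image of $M$. Together with $e_1^2 \notin \Span(e_1^2)^\perp$ (non-isotropic epipole), the four columns of $C_1$ span $\CC^3$, yielding rank $3$ and $c_1 = (1,0,0,0)^\top \neq c_2$.

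For (ii), fix $X = (X_0, X_1, X_2, X_3)^\top \in \PP^3$ and set $\tilde X = (0, X_1, X_2)^\top$. The column structure of $C_1, C_2$ yields $y_1 = M \tilde X + e_1^2 X_3$ and $y_2 = e_2^1 X_0 + \tilde X$. Expanding $y_1^\top F^{12} y_2$, all terms involving $F^{12} e_2^1$ or $(e_1^2)^\top F^{12}$ vanish by the definitions of the epipoles, leaving
\begin{equation*}
y_1^\top F^{12} y_2 \;=\; -\tilde X^\top (F^{12})^\top [e_1^2]_\times F^{12} \tilde X,
\end{equation*}
which vanishes because $(F^{12})^\top [e_1^2]_\times F^{12}$ is skew-symmetric (its transpose equals its negative, since $[e_1^2]_\times$ is). The cases $i = 1, 2$ are essentially identical with $\tilde X = (X_0, 0, X_2)^\top$ and $(X_0, X_1, 0)^\top$, respectively. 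The only nontrivial step in the argument is recognizing the skew-symmetry of $(F^{12})^\top [e_1^2]_\times F^{12}$, which collapses the surviving quadratic form; beyond that, the verification is routine bookkeeping, so I do not expect a serious obstacle.
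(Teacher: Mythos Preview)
Your argument is correct and takes a genuinely different route from the paper. The paper's proof is shorter and more structural: it starts from the canonical solution $C_1^{(0)} = \begin{bmatrix}[e_1^2]_\times F^{12} & e_1^2\end{bmatrix}$, $C_2^{(0)} = \begin{bmatrix}I & 0\end{bmatrix}$ (cited from Hartley--Zisserman), then right-multiplies both cameras by the invertible $4\times 4$ matrix $H$ obtained by appending the row $(0,0,0,1)$ to the displayed $C_2$. Right-multiplication by $H$ is a projective change of world coordinates and leaves the fundamental matrix unchanged, so $(C_1^{(0)} H, C_2^{(0)} H)$ is automatically a solution; a one-line check (using $M e_2^1 = [e_1^2]_\times F^{12} e_2^1 = 0$ to see that the first column of $C_1^{(0)}H$ vanishes) shows this pair equals the one in \eqref{eq: std form}. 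Your approach instead verifies the epipolar constraint $y_1^\top F^{12} y_2 = 0$ directly, with the skew-symmetry of $(F^{12})^\top [e_1^2]_\times F^{12}$ doing the essential work. This is self-contained and avoids the external citation, at the cost of a slightly longer computation and an appeal to the uniqueness (up to scale) of the bilinear form vanishing on the joint image. One minor caveat: your rank-$3$ argument for $C_1$ explicitly assumes $e_1^2$ is non-isotropic, which can fail over $\CC$; the same hypothesis is implicitly needed for the Hartley--Zisserman formula to produce a full-rank $C_1$, so this is not a defect relative to the paper's own proof.
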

In the statement, the epipoles are identified with affine representatives in $\CC^3$. Note that this lemma can easily be extended to get that the $i$th column of $C_1$ is zero and the $j$th column of $C_2$ is zero, where $i,j$ are distinct among $\{1,2,3,4\}$. 

\begin{proof} By \cite[Results 9.15]{Hartley2004}, there is a solution
\begin{align}
    C_1 = \begin{bmatrix}
        [e_1^2]_\times F^{12} & e_1^2
    \end{bmatrix},\quad C_2= \begin{bmatrix}
        I & 0
    \end{bmatrix}.
\end{align}
If $(e_2^1)_i\neq 0$, define $H$ to be the $4\times 4$ matrix we get taking the matrix $C_2$ from the correspond row of \eqref{eq: std form} and inserting a bottom row $(0,0,0,1)$. Then $C_1H,C_2H$ are as in \eqref{eq: std form}.
\end{proof}

Finally we define homographies. Fix two cameras $C_1$ and $C_2$. Given a fixed plane $h\in (\PP^3)^*$ away from the camera centers, $C_1H(h)$ and $C_2H(h)$ are invertible and we define 
\begin{align}
    H_{2,1}(h):=(C_2H(h))(C_1H(h))^{-1}.
\end{align}
This invertible matrix is a \textit{homography} from the first camera plane to the second camera plane. For a given point $X$ in the plane defined by $h$, the homography maps the image $C_1X$ in the first camera plane  to the image $C_2X$ in the second.

\section{Explicit Descriptions of Projections of Curves}\label{s: Exp Image Maps}

We describe the projections of degree-$d$ plane curves in \Cref{ss: plane curves} and of twisted cubics in \Cref{ss: twisted}. Recall that we always assume that $d\ge 2$. In both cases, we project the curves both from the set of associated Chow forms and from a parameter space. For plane curves, this space is $(\PP^3)^*\times\PP^{{d+2\choose 2}-1}$ and for twisted cubic, it is $\PP^{12}$. In summary, we construct maps $\Phi_{\Ca,d}^{\mathrm{Ch}},\Phi_{\Ca,d}$, and $\iota_{\mathrm{tw}},\Psi_{\Ca}^{\mathrm{Ch}}$ and $\Psi_{\Ca}$ such that the following diagrams commute:
\begin{center}
 \begin{tikzpicture}[every node/.style={midway}]
        \matrix[column sep={5.5em,between origins}, row sep={2em}] at (0,0) {
                         \node(D) {}; & \node(E) {$\mathrm{Ch}_{\mathrm{plane}}(d,\PP^3)$}; & \node(F) {}; \\
             \node(A) {$(\PP^3)^*\times \PP^{{d+2\choose 2}-1}$}; & \node(B) {}; & \node(C) {$(\PP^{{d+2\choose 2}-1})^n$}; \\
        };
        \draw[dashed, ->] (A) -- (E) node[above=0.1cm,right=-0.8cm]{$\iota_d$};
        \draw[dashed, ->] (E) -- (C) node[above=0.1cm,right=0.2cm]{$ \Phi_{\Ca,d}^{\mathrm{Ch}}$};
        \draw[dashed, ->] (A) -- (C) node[above]{$\Phi_{\Ca,d}$};
    \end{tikzpicture}
    \hspace{0.5cm}
     \begin{tikzpicture}[every node/.style={midway}]
        \matrix[column sep={5.5em,between origins}, row sep={2em}] at (0,0) {
                         \node(D) {}; & \node[above=-0.05cm](E) {$\mathrm{Ch}_{\mathrm{tw}}(\PP^3)$}; & \node(F) {}; \\
             \node(A) {$\PP^{12}$}; & \node(B) {}; & \node(C) {$(\PP^{9})^n$}; \\
        };
        \draw[dashed, ->] (A) -- (E) node[above=0.25cm,right=-0.8cm]{$\iota_{\mathrm{tw}}$};
        \draw[dashed, ->] (E) -- (C) node[above=0.25cm,right=0.2cm]{$ \Psi_{\Ca}^{\mathrm{Ch}}$};
        \draw[dashed, ->] (A) -- (C) node[above]{$\Psi_{\Ca}$};
    \end{tikzpicture}
\end{center}
The Zariski closures of the images $\mathrm{Im}\;\Phi_{\Ca,d}$ and $\mathrm{Im}\;\Phi_{\Ca,d}^{\mathrm{Ch}}$ coincide and this variety is called the \textit{degree-$d$ plane curve multiview variety}, denoted $\mathcal C_{\Ca,d}$. Likewise, we define the \textit{twisted cubic multiview variety} $\mathcal C_\Ca^{\mathrm{tw}}$ to be the Zariski closures of the coinciding images $\mathrm{Im}\;\Psi_{\Ca}$ and $\mathrm{Im}\; \Psi_{\Ca}^{\mathrm{Ch}}$. Multiview varieties model the collection of all possible simultaneous pictures that can be taken of a world feature given the cameras of $\Ca$. In this paper, the world features are either plane curves in $\PP^3$ or twisted cubics. Since they are the closures of images of rational maps, they are irreducible. We leave the remainder of the proof of the next result to \Cref{s: Cones}.

\begin{proposition}\label{prop: dim} The multiview varieties $\mathcal C_{\Ca,d}$ and $\mathcal C_\Ca^{\mathrm{tw}}$ are irreducible. If $\Ca$ contains at least two cameras with distinct centers, then
\begin{align}
    \dim \mathcal C_{\Ca,d}=\frac{1}{2}d^2+\frac{3}{2}d+3\quad \textnormal{ and } \quad  \dim \mathcal C_\Ca^{\mathrm{tw}}=12.
\end{align}
\end{proposition}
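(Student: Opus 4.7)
The plan is to get irreducibility immediately from the rational-map construction, to read off the dimension upper bound from the source dimensions of $\Phi_{\Ca,d}$ and $\Psi_\Ca$, and to defer the matching lower bound (generic finiteness) to the back-projected cone analysis of \Cref{s: Cones}.

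For irreducibility, observe that $\mathcal C_{\Ca,d}$ and $\mathcal C_\Ca^{\mathrm{tw}}$ are by definition Zariski closures of images of rational maps whose domains $(\PP^3)^*\times \PP^{\binom{d+2}{2}-1}$ and $\PP^{12}$ are irreducible. The image of an irreducible variety under a rational map is irreducible on its domain of definition, and its closure in the target is therefore irreducible. For the upper bound, a direct computation gives
$$\dim\bigl((\PP^3)^*\times \PP^{\binom{d+2}{2}-1}\bigr)=3+\binom{d+2}{2}-1=\tfrac{1}{2}d^2+\tfrac{3}{2}d+3,$$
and $\dim \PP^{12}=12$. Since a rational map cannot increase dimension, these are upper bounds for $\dim \mathcal C_{\Ca,d}$ and $\dim \mathcal C_\Ca^{\mathrm{tw}}$ respectively.

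For the matching lower bound, I would show that under the hypothesis that $\Ca$ contains two cameras $C_1,C_2$ with distinct centers, the map to the first two image factors is already generically finite; additional views only impose further constraints on the image tuples, so the generic fiber of the full $n$-view map remains $0$-dimensional and the image dimension coincides with the domain dimension by the fiber-dimension theorem \cite[Chapter 1, Section 8]{mumford1999red}. The tool is the back-projected cone construction of \Cref{s: Cones}: an image curve $\delta_i$ in camera $i$ has Zariski closure of $C_i^{-1}(\delta_i)$ equal to a cone $B_i\subset \PP^3$ of degree $d$ (resp.\ degree $3$ for a twisted cubic image) with apex at the center $c_i$. Any world curve $K$ projecting to $(\delta_1,\delta_2)$ is contained in $B_1\cap B_2$, which by Bézout is a curve of degree $d^2$ (resp.\ $9$) whenever $c_1\neq c_2$.

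The main obstacle is therefore to show that, for generic image data, only finitely many world curves of the relevant type lie in $B_1\cap B_2$. For plane curves this amounts to bounding the planes $h\in (\PP^3)^*$ for which $h\cap B_1\subseteq B_2$; equivalently, counting the degree-$d$ planar components of $B_1\cap B_2$ (see \Cref{fig: conics}, where generically two planar conics appear in the degree-$4$ intersection for $d=2$). For twisted cubics the analogous statement is that the residual intersection $B_1\cap B_2$ decomposes generically as $K$ plus a degree-$6$ curve containing no further twisted cubic (see \Cref{fig: twisted}). Both finiteness statements are exactly what the cone analysis in \Cref{s: Cones} is designed to deliver, and once they are in hand the proposition follows by combining the upper bound with the fiber-dimension theorem.
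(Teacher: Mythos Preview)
Your architecture matches the paper's: irreducibility from the irreducible domain, the upper bound from the source dimension, reduction to two cameras, and the fiber-dimension theorem for the lower bound via back-projected cones. The difference lies in how generic finiteness of the fiber is established. You propose to analyze the component structure of $B_1\cap B_2$ explicitly---counting planar degree-$d$ components in the plane-curve case, and arguing that the residual degree-$6$ curve after removing the twisted cubic contains no further twisted cubic. The paper's argument in \Cref{s: Cones} is shorter and uniform: if the generic fiber were positive-dimensional, the corresponding $1$-parameter family of world curves would sweep out a surface inside $B_1\cap B_2$, forcing $\dim(B_1\cap B_2)\ge 2$; since $B_1,B_2$ are irreducible surfaces (by \Cref{prop: back-proj}, using that generic image curves are irreducible), this forces $B_1=B_2$, which is impossible for cones with distinct apices because every line in a cone passes through its apex. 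Your route would work but requires case-specific arguments---the twisted-cubic residual claim in particular is a nontrivial statement not actually supplied in \Cref{s: Cones}---whereas the paper's route avoids decomposing $B_1\cap B_2$ altogether and handles both families at once. Note also that your invocation of B\'ezout to conclude $B_1\cap B_2$ is a curve already presupposes $B_1\neq B_2$, which is precisely the content of the paper's argument; once you have that, the finer decomposition analysis is superfluous.
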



\subsection{Plane curves}\label{ss: plane curves} Consider a degree-$d$ plane curve $\beta \in \mathrm{Ch}_{\mathrm{plane}}(d,\PP^3)$. Given a camera $C$ with center $c=(c^{(0)}:c^{(1)}:c^{(2)}:c^{(3)})$, the set of lines through $c$ that meet a point of the curve $\beta$ is
\begin{align}\label{eq: L rest}
    \{L\in \Gr(1,\PP^3): p\in L,\quad \beta^\top\nu_d^5(L)=0\}.
\end{align}
Let $\gamma$ denote the image of a curve $\beta$ away from $c$ with respect to $C$. For a pseudo-inverse $C^\dagger$ of $C$ such that $CC^\dagger=I$, the \textit{back-projected line} of a point $x\in \PP^2$ is the line $L$ in $\PP^3$ of points that are projected onto $x$ by $C$. In symbols, $L=c\vee (C^\dagger x)$, where $\vee$ refers to the join of projective subspaces. The set of lines \eqref{eq: L rest} should equal the set of back-projected lines associated to the curve $\gamma$, by which we mean
\begin{align}\label{eq: back-proj}
    \{c\vee (C^\dagger x)\in \Gr(1,\PP^3): x\in \PP^2,\quad \gamma^\top\nu_d^2(x)=0\}.
\end{align}
The mapping $x\mapsto c\vee (C^\dagger x)$ is linear in $x$ and we write $\hat{C}$ for the associated matrix. One can check that
\begin{align}
    \hat{C}=\underbrace{\begin{bmatrix}
    c^{(1)} & -c^{(0)} & 0 & 0 \\
    c^{(2)} & 0 & -c^{(0)} & 0 \\
    c^{(3)} & 0 & 0 & -c^{(0)} \\
    0 & c^{(2)} & -c^{(1)} & 0 \\
    0 & c^{(3)} & 0 & -c^{(1)} \\
    0 & 0 & c^{(3)} & -c^{(2)}
\end{bmatrix}}_{E(c):=}C^\dagger.
\end{align}
For \eqref{eq: L rest} and \eqref{eq: back-proj} to be equal, it is clear that we must have $\beta^\top\nu_d^5(\hat{C}x)=0$ for each $x\in \PP^2$ satisfying $\gamma^\top\nu_d^2(x)=0$ and vice versa. As a consequence of \Cref{ss: prod}, this implies that 
\begin{align}
    \beta^\top\nu_d^{5,2}(\hat{C})\nu_d^2(x)=0\quad \textnormal{ if and only if }\quad \gamma^\top\nu_d^2(x)=0.
\end{align}
We deduce the identity that the image curve $\gamma$ equals $\nu_d^{5,2}(\hat{C})^\top\beta$. Motivated by this, we define for a camera arrangement $\Ca$, the joint image map for plane curves:  
\begin{align}\begin{aligned}
    \Phi_{\Ca,d}^{\mathrm{Ch}}: \mathrm{Ch}_{\mathrm{plane}}(d,\PP^3)&\dashrightarrow (\PP^{{d+2\choose 2}-1})^n,\\
   \beta &\mapsto   (\nu_{d}^{5,2}(\hat{C}_1)^\top\beta,\ldots, \nu_{d}^{5,2}(\hat{C}_n)^\top\beta).
\end{aligned}
\end{align}
Recalling the map $\iota_d$ from \Cref{ss: Chow}, we define $ \Phi_{\Ca,d}:=\Phi_{\Ca,d}^{\mathrm{Ch}}\circ \iota_d$ to get an alternative description of the projection of plane curves. Explicitly, this map is given by
\begin{align}\begin{aligned}
    \Phi_{\Ca,d}: (\PP^3)^* \times \PP^{{d+2\choose 2}-1}&\dashrightarrow (\PP^{{d+2\choose 2}-1})^n,\\
   (h,\alpha)&\mapsto   (\nu_{d}^{5,2}(\hat{C}_1)^\top \nu_{d}^{2,5}(\hat{H}(h))^\top\alpha ,\ldots, \nu_{d}^{5,2}(\hat{C}_n)^\top \nu_{d}^{2,5}(\hat{H}(h))^\top\alpha ).
\end{aligned}
\end{align}
The map $\Phi_{\Ca,d}$ is degree-$d$ in the $\PP^3$ factor and degree-1 in the $\PP^{{d+2\choose 2}-1}$ factor. This is in contrast to $\Phi_{\Ca,d}^{\mathrm{Ch}}$, which is a linear map. However, this linearity comes at the cost of complicating the domain; $(\PP^3)^*\times \PP^{{d+2\choose 2}-1}$ is simpler than $\mathrm{Ch}_{\mathrm{plane}}(d,\PP^3)\subseteq \PP^{{d+5\choose 5}-1}$.

We next show an alternative way to deduce the mapping $\Phi_{\Ca,d}$ that is independent of Chow forms. We start with an element $(h,\alpha)\in (\PP^3)^*\times \PP^{{d+2\choose 2}-1}$. The image $\gamma$ of this point is the degree-$d$ curve in the image plane which contains all the points $CH(h)x$ for $x\in \PP^2$ that satisfy $\alpha^\top\nu_d^2(x)=0$. In symbols, $\gamma$ is characterized by the fact that 
\begin{align}
    \gamma^\top\nu_d^{2}(CH(h)x)=0\quad \textnormal{ if and only if }\quad \alpha^\top\nu_d^2(x)=0,
\end{align}
and we conclude that $\gamma=\nu_d^{2,2}(CH(h))^{-\top}\alpha$, where $\bullet^{-\top}$ means taking inverse and transpose. Our next claim is that the inverse of $CH(h)$ is precisely $\hat{H}(h)\hat{C}$. To see this, we calculate
\begin{align}
H(h) \hat{H}(h)E(c) =\lambda I +  c(h_3:-h_0:-h_1:-h_2),
\end{align}
for $\lambda =-h_0c^{(3)}+h_0c^{(1)}+h_1c^{(2)}+h_2c^{(3)}$. It  is now clear that $CH(h)\hat{H}(h)\hat{C}$ is the identity up to scaling, and we are done. 

\begin{remark} In specific scenarios, we may wish to project curves from a fixed plane in $\PP^3$. In this case, the projection simplifies to a linear point projection. Indeed, if the input $h$ is generic and fixed, then the map $\Phi_{\Ca,d}(h,\bullet)$ becomes
\begin{align}\begin{aligned}
    \PP^N&\to \PP^N,\\
    X&\mapsto (P_1X,\ldots,P_nX),
\end{aligned}
\end{align}
where $N={d+2\choose 2}-1$ and $P_i$ are $\nu_{d}^{2,2}(C_iH(h))^{-\top}$. 
\end{remark}


\subsection{Twisted cubics} \label{ss: twisted} Recall the map $\varphi$ from \Cref{ss: cubic} that parametrizes the standard twisted cubic. The image curve of a twisted cubic by a camera $C$ is singular. To see this, note that any such image curve $\gamma$ is parametrized by $CM\varphi(s:t)$ for some full rank $4\times 4$ matrix $M$. It is classically known that a smooth planar curve is rational if and only if its genus $g$ is $0$ \cite[Chapter 3, Section 7]{shafarevich1994basic}. However, by the genus-degree formula, the genus of a smooth planar cubic curve is $1$ \cite[Section 8.3]{fulton2008algebraic}. 
Cubic curves can have two different types of singularities, nodal and cuspidal. The singularity type of an image curve depends on the pose of the camera with respect to the twisted cubic, as illustrated in \Cref{fig: twisted}. For a generic center, the singularity is nodal. Indeed, the image curve has a cusp if and only if the center lies on a tangent line to the curve. 

Recall that the map \eqref{eq: pre phi tw}, which parametrizes the set of twisted cubics has 3-dimensional fibers. This map can be made finite-to-1 by restricting the domain. To do this, we introduce the notation
\begin{align}\label{eq: m}
    M(m):=\begin{bmatrix}
    0 & m_0 & m_{1} & m_{2} \\
    -m_0 & 0 & m_{3} & m_{4} \\
    m_{5} & m_{6} & m_{7} & m_{8} \\
    m_{9} & m_{10} & m_{11} & m_{12}
\end{bmatrix},
\end{align}
for a vector $m\in \PP^{12}$. Recall the definition of $\Sigma$ from \Cref{ss: prod}.

\begin{theorem} The rational map
\begin{align}\begin{aligned}
  \iota_{\mathrm{tw}}: \PP^{12}&\dashrightarrow \mathrm{Ch}_{\mathrm{tw}}(\PP^3),\\
  m& \mapsto \nu_3^{5,5}(\Sigma\wedge^2(M(m)^\top)\Sigma)^{\top}\omega^{\mathrm{tw}}  
\end{aligned}
\end{align}
is degree-6, generically 3-to-1 and dominant.
\end{theorem}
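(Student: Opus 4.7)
The plan is to verify the three claims — polynomial degree $6$, generically $3$-to-$1$, and dominance — separately, with the bulk of the work in the fiber count.

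For the degree, I would read it off the composition defining $\iota_{\mathrm{tw}}$. The assignment $m\mapsto M(m)$ is linear; $\wedge^2(M^\top)$ has entries given by $2\times 2$ minors, hence is degree $2$ in $M$; and $\nu_3^{5,5}$ applied to a $6\times 6$ matrix produces entries that are degree $3$ in the matrix entries. Since $\Sigma$ and $\omega^{\mathrm{tw}}$ are constants, each coordinate of $\iota_{\mathrm{tw}}(m)$ is a polynomial of degree at most $6$ in $m$. To confirm the degree is exactly $6$, I would evaluate on one generic $m$ and check that the coordinate polynomials share no common factor.

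The fiber count is the main step. By \Cref{le: PGL}, two full-rank matrices $M,M'\in\PP(\CC^{4\times 4})$ represent the same twisted cubic if and only if $M'=M\rho(A)$ for some full-rank $A\in\PP(\CC^{2\times 2})$. Let $V\subset\PP(\CC^{4\times 4})$ be the $12$-dimensional linear subspace cut out by $M_{11}=M_{22}=0$ and $M_{12}+M_{21}=0$; the assignment $m\mapsto M(m)$ identifies $\PP^{12}$ with $V$. For generic $m_0$, the preimages of $\iota_{\mathrm{tw}}(m_0)$ correspond bijectively to the full-rank $A\in\PP(\CC^{2\times 2})$ with $M(m_0)\rho(A)\in V$. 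Writing $A=\left[\begin{smallmatrix}a&b\\c&d\end{smallmatrix}\right]$ and using \eqref{eq: PGL1}, the equation $(M(m_0)\rho(A))_{11}=0$ factors as
\begin{align*}
c\,(m_0 a^2+m_1 a c+m_2 c^2)=0.
\end{align*}
In the branch $c=0$, the remaining two conditions collapse to $A=I$ up to scaling (the alternative $a=0$ gives $\det A=0$). In the branch where the quadratic vanishes, there are two values of $a/c$, and for each the two remaining conditions $(M(m_0)\rho(A))_{22}=0$ and $(M(m_0)\rho(A))_{12}+(M(m_0)\rho(A))_{21}=0$ form an affine-linear system in $(b,d)$ that is generically of full rank and yields a unique solution. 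The total contribution is $1+2=3$.

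Dominance then follows from a dimension count: since $\dim\mathrm{Ch}_{\mathrm{tw}}(\PP^3)=12=\dim\PP^{12}$ by \Cref{le: PGL} and the generic fiber is finite and nonempty, the image has dimension $12$ and is Zariski-dense in the irreducible variety $\mathrm{Ch}_{\mathrm{tw}}(\PP^3)$. The hard part is the branch analysis in the second case, where one must verify that the resulting affine-linear system in $(b,d)$ is generically nondegenerate; this reduces to checking that a handful of explicit polynomials in $m_0,\ldots,m_{12}$ do not vanish identically, which can be confirmed on a specific numerical example.
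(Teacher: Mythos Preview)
Your proof is correct and follows essentially the same route as the paper's: both reduce the fiber count to counting full-rank $A\in\PP(\CC^{2\times 2})$ with $M(m)\rho(A)\in V$ via \Cref{le: PGL}, observe that the $(1,1)$-condition factors as $c$ times a quadratic in $(a,c)$ (the paper phrases this as a cubic in $c$ after setting $a=1$, with $c=0$ one of its roots), and then solve a generically nondegenerate linear system in $(b,d)$ for each of the three values of $(a:c)$; dominance is deduced from the dimension count in the same way. The only cosmetic difference is that you bound the degree conceptually from the composition $m\mapsto M(m)\mapsto \wedge^2(M^\top)\mapsto \nu_3^{5,5}(\cdots)$ before invoking a computation to rule out a common factor, whereas the paper simply reports the Macaulay2 check.
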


Given a generic choice of 3 generic linear constraints on $\PP(\CC^{4\times 4})$, we instead get a 27-to-1 map, as we checked in \texttt{Macaulay2} \cite{M2}. Fixing different sets of three linear constraints, we have also found 15-to-1 and 6-to-1 maps. We conjecture that there is no choice of linear constraints making the map generically 2-to-1 or 1-to-1.

\begin{proof} We have checked in \texttt{Macaulay2} that the map is degree-6 in $m$. 

To prove that $\iota_{\mathrm{tw}}$ is generically 3-to-1, it suffices by \Cref{le: PGL} to show that given a generic $m\in \PP^{12}$, there are precisely 3 invertible matrices $A=\left[\begin{smallmatrix}
    a & b \\ c& d
\end{smallmatrix}\right]$ such that $M(m)\rho(A)$ is on the form \eqref{eq: m}, i.e. such that the first two diagonal entries are 0, and entries $(12)$ and $(21)$ add to zero. These three conditions are
\begin{align}\begin{aligned}
0&=m_0a^{2}c+m_{1}ac^{2}+m_{2}c^{3},\\
0&=-3m_{0}a^{2}b+m_{3}(2acd+c^{2}d)+3m_{4}c^{2}d,\\
      0&=m_{0}(a^{3}+a^{2}d+2abc)+m_{1}(2acd+c^2d)+3m_{2}c^{2}d-m_{3}ac^{2}-m_{4}c^{3}.
\end{aligned}
\end{align}
If $a=0$ we find from the first equation that $c=0$ (since $m$ is generic), which only gives non-invertible solutions. For $a\neq 0$, we may without restriction assume that $a=1$. Under this assumption, the first equations is a degree-3 univariate polynomial in $c$ with three solutions (one of them is $c=0$). The last two equations are linear in $b$ and $d$ ones $a$ and $c$ are fixed, meaning that there is a unique solution for $b$ and $d$. For generic $m$, each of these three solutions in $A$ are full rank.

By the above, the fibers of $\iota_{\mathrm{tw}}$ are 0-dimensional. The fiber dimension theorem implies that $\dim (\mathrm{Im}\;\iota_{\mathrm{tw}})=12$. Since $\mathrm{Ch}_{\mathrm{tw}}(\PP^3)$ is irreducible and 12-dimensional, we are done. 
\end{proof}


Consider a twisted cubic $\beta \in \mathrm{Ch}_{\mathrm{tw}}(\PP^3)$. Given a camera $C$ with center $c$ away from $\beta$, the set of lines through $c$ that meet a point of the curve $\beta$ is
\begin{align}\label{eq: L rest tw}
    \{L\in \Gr(1,\PP^3): c\in L,\quad \beta^\top\nu_3^5(L)=0\}.
\end{align}
Let $\gamma$ denote the image of $\beta$ with respect to $C$. A treatment identical to that in the case of plane curves allows us to conclude that this set must equal  
\begin{align}
    \{c\vee (C^\dagger x)\in \Gr(1,\PP^3): x\in \PP^2,\quad \gamma^\top\nu_3^2(x)=0\}.
\end{align}
In particular, the image curve $\gamma$ equals $\nu_3^{5,2}(\hat{C})^\top\beta$. Motivated by this fact, we define for a camera arrangement $\Ca$, the projection map for twisted cubics: 
\begin{align}\begin{aligned}       \Psi_\Ca^{\mathrm{Ch}}:\mathrm{Ch}_{\mathrm{tw}}(\PP^3) &\dashrightarrow (\mathbb{P}^9)^n, \\
        \beta &\mapsto (\nu_3^{5,2}(\hat{C}_1)^\top \beta,\ldots, \nu_3^{5,2}(\hat{C}_n)^\top \beta).
    \end{aligned}
\end{align}
We define $\Psi_{\Ca}$ to be the composition of this map with $\iota_{\mathrm{tw}}$. We leave it to future work to demonstrate a Chow-free derivation of $\Psi_{\Ca}$.

\section{Back-Projected Cones}\label{s: Cones} In the case of multiview varieties defined through projection of points, or more general projective subspaces, it is helpful to work with back-projected planes, when studying Multiview Geometry \cite{ponce2017congruences,rydell2023triangulation}. In this direction, we define the \textit{back-projected cone} of a degree-$d$ image curve $\gamma$ with respect to the camera $C$ as
\begin{align}\label{eq: back-proj def}
    B_C(\gamma):=\{X\in \PP^3: \gamma^\top\nu_{d}^2 (CX)=\gamma^\top\nu_{d}^{2,3}(C)\nu_{d}^3(X)=0\}.
\end{align}
We often suppress the index $C$, as the camera is understood from context. For illustrations, see \Cref{fig: conics,fig: twisted}. It is called a cone, since to each point $X$ on $B(\gamma)$ away from the center $c$, the line $c\vee X$ is contained in $B(\gamma)$. This follows directly from the condition $\gamma^\top \nu_d^2(CX)=0$. By definition, $B(\gamma)$ is defined by a single equation. This equation cannot be identically zero since $C:\PP^3\dashrightarrow\PP^2$ is dominant and $\gamma^\top\nu_d^2(x)=0$ is 1-dimensional in $x\in \PP^2$. It follows that $B(\gamma)$ is a hypersurface, and we may identify it with the vector $\zeta=\nu_d^{2,3}(C)^\top\gamma$. Note that there is a natural bijection between an image curve and its back-projected cone:
\begin{align}\begin{aligned}
    \gamma& \mapsto \zeta= \nu_d^{2,3}(C)^\top \gamma,\\
    \zeta &\mapsto \gamma =  \nu_d^{2,3}(C^\dagger)^\top \zeta.
\end{aligned}
\end{align}

\begin{proposition}\label{prop: back-proj} The back-projected cone $B(\gamma)$ is irreducible if and only if $\gamma$ is. The set-theoretic degree of $B(\gamma)$ is the set-theoretic degree of $\gamma$. 
\end{proposition}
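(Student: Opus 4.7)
The plan is to reduce to a standard form of the camera via a projective linear change of coordinates on $\PP^3$, which turns $B(\gamma)$ into essentially the same polynomial equation as $\gamma$, viewed with one extra (free) variable. Both irreducibility and set-theoretic degree are invariant under $\mathrm{PGL}_4$-automorphisms of $\PP^3$, and post-composing $C$ by such an automorphism only permutes back-projected cones of the form $B(\gamma)$ by the same automorphism. Since $C\in\CC^{3\times 4}$ has full rank, after such a coordinate change we may assume $C=[I_3\mid 0]$, in which case $c=(0{:}0{:}0{:}1)$ and $CX=(X_0,X_1,X_2)$. Writing $q(x_0,x_1,x_2):=\gamma^\top\nu_d^2(x)$ for the defining polynomial of $\gamma\subset\PP^2$, the defining polynomial of $B(\gamma)$ from \eqref{eq: back-proj def} becomes $q(X_0,X_1,X_2)\in\CC[X_0,X_1,X_2,X_3]$.

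Next I compare the factorizations of $q$ in the two polynomial rings $\CC[x_0,x_1,x_2]$ and $\CC[X_0,X_1,X_2,X_3]$. The key algebraic fact is: if $q=f\cdot g$ in $\CC[X_0,X_1,X_2,X_3]$, then regarding both sides as polynomials in $X_3$ with coefficients in $\CC[X_0,X_1,X_2]$, the left side has $X_3$-degree zero, so $\deg_{X_3}(f)=\deg_{X_3}(g)=0$ and hence $f,g\in\CC[X_0,X_1,X_2]$. Therefore the irreducible factorization $q=q_1^{e_1}\cdots q_r^{e_r}$ in $\CC[x_0,x_1,x_2]$ is also the irreducible factorization of $q$ in $\CC[X_0,X_1,X_2,X_3]$, and $\deg q_i$ is the same in both rings.

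Finally I conclude. As varieties, $\gamma=V(q_1)\cup\cdots\cup V(q_r)\subset\PP^2$ and $B(\gamma)=V(q_1)\cup\cdots\cup V(q_r)\subset\PP^3$, with the irreducible components indexed by the same $q_i$. Hence $B(\gamma)$ is irreducible (i.e.\ $r=1$) if and only if $\gamma$ is, and both have set-theoretic degree $\sum_{i=1}^r\deg q_i$. This gives the proposition.

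\textbf{Main obstacle.} There is no serious obstacle: the whole argument hinges on the trivial observation that adding a fresh variable to the ambient polynomial ring does not create new factorizations of a polynomial that does not involve that variable. The only thing requiring care is to note explicitly that the normalization $C=[I_3\mid 0]$ costs nothing because the statement we want to prove is invariant under the $\mathrm{PGL}_4$-action on $\PP^3$.
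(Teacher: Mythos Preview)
Your proof is correct and takes a genuinely different route from the paper. The paper argues geometrically: for one direction it uses that the camera map restricted to $B(\gamma)$ is dominant onto $\gamma$ (so irreducibility passes to the image), and for the converse it decomposes $B(\gamma)$ into irreducible components, projects each to $\PP^2$, and uses irreducibility of $\gamma$ to force a single component. For the degree it intersects with a generic line in $\PP^3$ lying over a generic line in $\PP^2$ and counts back-projected lines. Your approach instead normalizes $C=[I_3\mid 0]$ via a $\mathrm{PGL}_4$-change of coordinates and reduces everything to the purely algebraic observation that the factorization of $q(X_0,X_1,X_2)$ is identical in $\CC[X_0,X_1,X_2]$ and in $\CC[X_0,X_1,X_2,X_3]$, since a factorization over the larger ring must have $X_3$-degree zero in each factor. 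This is shorter and more elementary; the paper's argument has the minor advantage of being coordinate-free and of illustrating the cone structure directly. One small wording issue: you write ``post-composing $C$ by such an automorphism'', but the coordinate change on $\PP^3$ is a \emph{pre}-composition of the map $C$ (equivalently, right-multiplication of the camera matrix). This does not affect the argument.
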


\begin{proof} Assume that $B(\gamma)$ is irreducible. The restriction of the camera map $C:\PP^3\dashrightarrow \PP^2$ to $B(\gamma)$ is dominant onto $\gamma$. Since the image of a rational map from an irreducible variety is irreducible, $\gamma$ is irreducible. Conversely, we show that if $\gamma$ is irreducible, then so is $B(\gamma)$. Write $B(\gamma)=B_1\cup \cdots\cup B_k$ for the irreducible components of $B(\gamma)$, and note that these must all be hypersurfaces. The image of $B(\gamma)$ under $C$ is the union of the images $\gamma_i$ of $B_i$. The irreducibility of $\gamma$ implies that $\gamma_i=\gamma$ for some $i$. By definition, $B_i$ equals $B(\gamma)$, proving that $B(\gamma)$ is irreducible.

Let $\gamma_s$ denote the reduced curve associated to $\gamma$ and let $e$ denote its degree. Consider a generic line $\ell\in \PP^2$. It meets $\gamma_s$ in $e$ many points with multiplicity 1. Denote by $L_1,\ldots,L_e$ the corresponding back-projected lines. A generic line in the back-projected plane of $\ell$ meets each $L_i$ in a point, and such a line is generic in $\PP^3$, since $\ell$ was generic. Therefore a generic line meets $B(\gamma)$ in $e$ many points. Set-theoretically, every such intersection point has multiplicity 1, showing the statement.
\end{proof}

\begin{proof}[Proof of \Cref{prop: dim}] It suffices to prove that statement for an arrangement $\Ca=(C_1,C_2)$ of two cameras $C_1$ and $C_2$ with distinct centers $c_1$ and $c_2$. The dimensions listed in the statement are the dimensions of the corresponding domains $(\PP^3)^*\times \PP^{{d+2\choose 2}-1}$ and $\PP^{12}$. If the image of $\Phi_{\Ca,d}$ or $\Psi_{\Ca}$ is of less dimension, then by the fiber dimension theorem, the generic fiber would have to be at least 1-dimensional. A generic tuple $(\gamma_1,\gamma_2)$ in the image of either $\Phi_{\Ca,d}$ or $\Psi_\Ca$ have that $\gamma_1$ and $\gamma_2$ are irreducible curves. Their back-projected cones $B_1$ and $B_2$ are therefore irreducible by \Cref{prop: back-proj}. If the generic fiber is at least 1-dimensional, then these back-projected cones would need to meet in at least 2 dimensions. This is only possible if they are equal, since they are 2-dimensional and irreducible. However, irreducible cones of distinct centers are not equal. This follows from the fact that cones are unions of lines that all go through the center, and there is only one line that goes through two distinct centers.       
\end{proof}

Let $\mathcal F$ be an irreducible family of degree-$d$ space curves, represented via their Chow forms. For example, $\mathcal F=\mathrm{Ch}_{\mathrm{plane}}(d,\PP^3)$ and $\mathcal F=\mathrm{Ch}_{\mathrm{tw}}(\PP^3)$. Analogous to \Cref{s: Exp Image Maps}, the image $\gamma$ of a curve $\beta\in \mathcal F$, given the camera $C$ equals
\begin{align}
    \gamma=\nu_d^{5,2}(\hat{C})^\top \beta.
\end{align}
From the definition \eqref{eq: back-proj def}, given an image curve $\gamma$, the back-projected cone $B(\gamma)$ is defined by the vector $\zeta=\nu_d^{2,3}(C)^\top \gamma$. Putting this together, we have $\zeta= \nu_d^{5,3}(\hat{C}C)^\top \beta$. However, the only information about the camera that has an impact on the back-projected cone is its center. Indeed, we show that $E(c)= \hat{C}C$. Here, $c=(c^{(0)}:c^{(1)}:c^{(2)}:c^{(3)})$ is the centers of $C$ and $E(c)$ is the $6\times 4$ matrix defined in \Cref{ss: plane curves}. Assume that the camera matrix $C$ is on the form $\begin{bmatrix}
    A & t
\end{bmatrix}$, where $A$ is a full rank $3\times 3$ matrix and $t$ is a vector of length $3$. Then $c^{(3)}\neq 0$ and for $c'=(c^{(0)}:c^{(1)}:c^{(2)})$, we have $t=-(1/c^{(3)})Ac'$. From this we deduce the the identity $C=\begin{bmatrix}
    c^{(3)}A & -Ac'
\end{bmatrix}$. Since a choice of pseudo-inverse $C^\dagger$ is $\begin{bmatrix}
    A^{-1}\\ 0
\end{bmatrix}$, we observe that
\begin{align}
    \hat{C}C=E(c)\begin{bmatrix}
    c^{(3)}I & -c'\\ 0 & 0
    \end{bmatrix}.
\end{align}
The right-hand-side of the above equality can directly be checked to equal $E(c)$, and we are done. The following map sends curves to their corresponding back-projected cones 
\begin{align}\begin{aligned}\label{eq: p join B}
 \mathcal F& \dashrightarrow \PP^{{d+3\choose 3}-1},\\
     \beta &\mapsto c\vee \beta:=\nu_d^{5,3}(E(c))^\top\beta.
\end{aligned}
\end{align}
This map is well-defined for $\beta$ that does not meet $c$. This is because $c\vee X=E(c)X$ and $\beta^\top \nu_2^5(c\vee X)$ is identically equal to zero if and only if $\beta$ meets $c$. 

We define $\mathcal B_{c}(\mathcal F)$ to be the Zariski closure of the image $\beta\mapsto c\vee \beta$. This is the set of back-projected cones associated to $\mathcal F$ through the camera center $c$. We often suppress the $\mathcal F$ index as it is understood from context. 

The next problem we seek understand is what curves from the family $\mathcal F$ is associated to a given cone of $\mathcal B_c$. More precisely, we wish to study the blowup of the map $\beta\mapsto c\vee \beta$, defined as 
\begin{align}
    \overline{\Gamma}_c:=\overline{\{(\beta,c\vee \beta):\beta \textnormal{ does not meet }c \}}\subseteq \mathcal F\times \mathcal B_c.
\end{align}
We characterize the blowup in the case that $\mathcal F$ is the family of conic curves in $\PP^3$, which is exactly what we need to deduce set-theoretic constraints for the multiview variety $\mathcal C_{\Ca,2}$ in \Cref{s: Conic MV}.

\begin{lemma}\label{le: blowup} Let $\mathcal F= \mathrm{Ch}_{\mathrm{plane}}(2,\PP^3)$. Then
\begin{align}\begin{aligned}\label{eq: blowup}
    \overline{\Gamma}_c=\{(\beta, B): &\textnormal{ }\beta\subseteq B\textnormal{ and for every }X\in B,\textnormal{ the line }c\vee X \textnormal{ meets } \beta \textnormal{ and}\\
    & \textnormal{ the tangent plane } T_XB \textnormal{ is a tangent plane of } \beta \}.
\end{aligned}
\end{align}
\end{lemma}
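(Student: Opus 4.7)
The plan is to establish the equality by proving both inclusions, with the reverse direction being the substantive part.

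\textbf{Forward inclusion $\overline{\Gamma}_c \subseteq R$} (where $R$ denotes the right-hand side of \eqref{eq: blowup}). First I check that the open subset $\Gamma_c^{\circ} := \{(\beta, c\vee\beta) : \beta \in \mathrm{Ch}_{\mathrm{plane}}(2,\PP^3),\ c\notin \beta\}$ lies in $R$. For such a pair, $\beta\subseteq c\vee\beta$ holds by construction of the back-projected cone via \eqref{eq: p join B}. For a smooth point $X$ of $c\vee\beta$, the point $X$ lies on a unique ruling $c\vee Y$ with $Y\in\beta$, so $c\vee X = c\vee Y$ meets $\beta$. The tangent plane $T_X(c\vee\beta)$ is spanned by this ruling and the tangent line $T_Y\beta$, so in particular it contains $T_Y\beta$ and is thus a tangent plane of $\beta$. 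Next, I argue that the three defining conditions of $R$ are Zariski-closed on $\mathcal F\times\mathcal B_c$: set-theoretic containment $\beta\subseteq B$, the incidence condition "$c\vee X$ meets $\beta$ for all $X\in B$", and the tangency condition can each be encoded by the vanishing of polynomials in the Chow form of $\beta$ and the quadric equation of $B$ (via suitable incidence varieties and elimination). Since $R$ is closed and contains the dense subset $\Gamma_c^{\circ}$, it contains $\overline{\Gamma}_c$.

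\textbf{Reverse inclusion $R \subseteq \overline{\Gamma}_c$.} Given $(\beta,B)\in R$, the argument splits by the type of $\beta$. In the generic case where $\beta$ is irreducible and $c\notin\beta$, condition (2) forces $B$ to be set-theoretically contained in $c\vee\beta$, and condition (1) combined with the fact that $c\vee\beta$ is the Zariski closure of the union of rulings from $c$ through $\beta$ gives the reverse containment, so $B=c\vee\beta$ as sets. Since both are irreducible quadric surfaces, their defining forms agree up to scalar, hence $B=c\vee\beta$ as a Chow form, and $(\beta,B)\in\Gamma_c^{\circ}$.

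For the remaining degenerate cases (when $\beta$ meets $c$, or $\beta$ is reducible or non-reduced), I construct an explicit one-parameter deformation $\beta_t$ in $\mathcal F$ with $\beta_0 = \beta$ and $c\notin\beta_t$ for $t\neq 0$; such deformations exist because the locus $\{c\in\beta\}\cup\{\beta\text{ non-reduced or reducible}\}$ is a proper closed subset of $\mathcal F$. Then $(\beta_t, c\vee\beta_t)\in\Gamma_c^{\circ}$ and the flat limit $\tilde B = \lim_{t\to 0}(c\vee\beta_t)$ satisfies $(\beta,\tilde B)\in\overline{\Gamma}_c$. Finally, I verify that $\tilde B = B$ by matching tangency data.

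\textbf{Main obstacle.} The crux is the degenerate case $c\in\beta$: condition (2) becomes vacuous, since every line through $c$ meets $\beta$ at $c$, so only conditions (1) and (3) constrain $B$. The tangency condition (3) is precisely what rules out spurious degenerate quadrics (arbitrary double planes through $\beta$, or unions of planes meeting $\beta$) and pins down the correct flat limit of back-projected cones. Making this uniqueness precise will likely require an infinitesimal analysis at a smooth point of $\beta$ away from $c$, or alternatively a dimension-and-irreducibility comparison: since $\dim\overline{\Gamma}_c = \dim\mathcal F$ and the first projection of $R$ is dominant onto $\mathcal F$, showing that $R$ is irreducible of the correct dimension would force equality $R = \overline{\Gamma}_c$ from the forward inclusion alone.
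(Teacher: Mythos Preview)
Your forward inclusion is fine in outline, and your case analysis for the reverse inclusion begins correctly. The genuine gap is in the degenerate case $c\in\beta$. You propose to pick \emph{one} deformation $\beta_t\to\beta$ with $c\notin\beta_t$, take the limit cone $\tilde B=\lim_{t\to 0}(c\vee\beta_t)$, and then ``verify that $\tilde B=B$ by matching tangency data.'' But the fiber of $\overline{\Gamma}_c\to\mathcal F$ over such a $\beta$ is \emph{not} a single point: when $\beta$ is an irreducible conic through $c$ there is a pencil of admissible $B$'s (unions $H\cup H'$ with $H\supseteq\beta$ and $H'$ any plane through the tangent line $T_c\beta$), and when $\beta$ is two lines meeting at $c$ the fiber consists of \emph{all} cones containing $\beta$. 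So a single deformation cannot hit every $B$ in $R$; you must construct, for each prescribed $B$, a deformation $\beta_t$ whose associated cones converge to that particular $B$. Your phrasing ``making this uniqueness precise'' suggests you expect the limit to be unique, which is exactly the wrong picture.

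The paper's proof handles this by working case by case and constructing the sequences explicitly. For an irreducible $\beta$ through $c$, it fixes $\beta$ and instead moves the \emph{center} along a generic line in the prescribed tangent plane $H'$, showing the cones $c_n\vee\beta$ converge to $H\cup H'$; a projective change of coordinates then converts this into a sequence $\beta_n\to\beta$ with the center fixed. For $\beta$ two lines through $c$, it approximates the given $B$ by irreducible cones $B_n\supseteq\beta$ and then realizes each $(\beta,B_n)$ as a limit by slicing $B_n$ with planes $h_s\to h$. Your alternative plan---show $R$ is irreducible of dimension $\dim\mathcal F$---would also work, but you give no indication of how to establish irreducibility of $R$, and that is not obviously easier than the direct construction.
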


Before we begin the proof, we comment on the first two conditions that appear in \eqref{eq: blowup}. Assuming $\beta$ does not meet $c$, we claim that they are equivalent to  
\begin{align}
     \mathrm{rank} \begin{bmatrix}
          \zeta & \nu_2^{5,3}(E(c))^\top \beta
     \end{bmatrix}\le 1.
\end{align}
By the mapping \eqref{eq: p join B}, $\zeta = \nu_2^{5,3}(E(c))^\top \beta$ for $\beta$ away from $c$, which is equivalent to the above determinant condition. Conversely, assume that \eqref{eq: p join B} holds and take a point $X$ in $\beta$. We know that $ \beta^\top \nu_2^5(c\vee X)=0$, implying by $\zeta = \nu_2^{5,3}(E(c))^\top \beta$ that $\zeta^\top \nu_2^3(X)=0$. This means that $X$ lies in $B$. Further, for any $X$ satisfying $\zeta^\top\nu_2^3(X)=0$ we have that $\beta^\top \nu_2^5(c\vee X)=0$. This means that $c\vee X$ meets $\beta$, which we wanted.

\begin{proof} There are three possibilities for a degree-2 curve $\beta$, listed as follows. 
\begin{enumerate}
    \item $\beta$ does not meet $c$,\label{enum: not meet p}
    \item $\beta$ does meet $c$, but is not a union of two lines that meet at $c$, and \label{enum: meet p but no union}
    \item $\beta$ does meet $c$, and is a union of two lines that meet at $c$.\label{enum: meet p and union}
\end{enumerate}

We prove that an element $(\beta,B)$ in the right-hand-side of \eqref{eq: blowup} lies in $\overline{\Gamma}_c$, and that any element $(\beta,B)$ which do not satisfy these condition cannot lie in the blowup. In the case of \Cref{enum: not meet p}, the two first conditions ensure that $B=c\vee \beta$ as explained before the proof. The third condition also holds for such $(\beta,B)$. 

We continue by assuming that $\beta$ does meet $c$. By Chevalley's theorem \cite[Theorem 3.16]{harris2013algebraic}, to any element $(\beta,B)\in \overline{\Gamma}_c$, there is a sequence $\beta_n\to \beta$ of conics away from $c$ converging in Euclidean topology such that $c\vee \beta_n\to B$. Assume that $\beta$ does not lie in $B$. In small enough neighborhoods of $\beta$ and $B$ in Euclidean topology, there are no $\beta'$ and $B'$ such that $\beta'\subseteq B'$. By Chevalley's theorem, this shows that $\beta\subseteq B$ for any tuple $(\beta,B)\in  \overline{\Gamma}_c$. Also, since $c$ lies in $\beta$, any line $c\vee X$ meets $\beta$. In conclusion, the first two conditions hold for any tuple $(\beta, B)\in  \overline{\Gamma}_c$. 

For \Cref{enum: meet p but no union}, the only cones that contain such a $\beta$ is the union two planes $h_1$ and $h_2$, such that one of them, say $h_1$, contains $\beta$. If $h_2$ is not a tangent plane of $\beta$, we get a contradiction as follows. In small neighborhoods of $\beta$ and $B$, there are no $\beta'$ and $B'$ for which every tangent plane of $B'$ is a tangent plane of $\beta'$, however this holds in the image of $(\beta,c\vee \beta)$ and by Chevalley's theorem we conclude that $h_2$ must be tangent to $\beta$. Conversely, assume that $h_2$ is a tangent plane of $\beta$. This happens precisely when $h_2$ contains the tangent line of $\beta$ at $c$. In this case, choose a generic line $L$ through $c$ in $h_2$. Consider now a sequence of centers $c_n\to c$ and a sequence of cones $B_n:=c_n\vee \beta$. We claim that $B_n\to B$. This is because in the limit, the cone must still contain the plane $h_1$ and the line $L$. In particular, it is a union of two planes. Further, the tangent planes of $B_n$ at the line $L$ away from $c_n$ are constantly equal $h_2$ by construction. Therefore, in the limit, $B_n$ becomes a union of two planes and these are $h_1$ and $h_2$. Now, by projective transformations of $c_n$ and $\beta$, we can assume that $c_n$ is fixed equal to $c$ and that there is instead a sequence $\beta_n$ tending toward $\beta$. In the limit, $c\vee \beta_n$ becomes the limit of $B_n$. We have seen that in this case, the third condition of \eqref{eq: blowup} precisely describes tuples $(\beta,B)$ lie in the blowup. 

Finally, we consider \Cref{enum: meet p and union}. For $(\beta,B)\in \overline{\Gamma}_c$, we have seen that $\beta$ must lie in $B$. We claim that there are no other constraints on $B$, so let $B$ be any cone containing $\beta$. Let $h$ be a plane that meets $B$ exactly in $\beta$. The cone $B$ can be approximated by a sequence of irreducible cones $B_n\to B$ that contain $\beta$. We show that each $(\beta,B_n)$ lies in $\overline{\Gamma}_c$. For this, take any sequence of planes $h_s\to h$ not meeting $c$. The intersection of $h_s$ and $B_n$ is a degree-$2$ curve $\beta_{s,n}$ that does not meet $c$, meaning $B_n=c\vee \beta_{s,n}$. For any fixed $n$, we have that as $s\to \infty$, $\beta_{s,n}\to \beta$. This shows that each $(\beta, B_n)\in \overline{\Gamma}_c$ and we are done. Note that any tangent plane of $B$ is a tangent plane of $\beta$, as it has a singularity at $c$.
\end{proof}

In order to clarify the different scenarios that arise from \Cref{le: blowup}, we state the following, which is a direct result of the proof above. 

\begin{proposition}\label{prop: blowup} Let $\mathcal F= \mathrm{Ch}_{\mathrm{plane}}(2,\PP^3)$. For $(\beta,B)\in \overline{\Gamma}_c$, there are three possibilities, listed as follows.
\begin{enumerate}
    \item $\beta$ does not meet $c$. Then $\beta$ is uniquely defined as $c\vee \beta$.  
    \item $\beta$ does meet $c$, and is not a union of two lines that meet at $c$. Then $B$ is the union of the plane spanned by $\beta$ and a plane through the tangent of $\beta$ at $c$.
    \item $\beta$ does meet $c$, and is a union of two lines that meet at $c$. Then $B$ is any cone that contains $\beta$.
\end{enumerate}
\end{proposition}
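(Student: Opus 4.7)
The plan is to read off the three cases directly from \Cref{le: blowup}, which characterizes $\overline{\Gamma}_c$ in terms of the containment $\beta \subseteq B$, the line-through-$c$ condition, and the tangent-plane condition. Each case corresponds to one of the three geometric possibilities for how a conic $\beta$ can meet $c$, already enumerated in the proof of \Cref{le: blowup}, and the proposition simply repackages the analysis done there.

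For case (1), when $\beta$ does not meet $c$, the map $\beta \mapsto c\vee\beta$ of \eqref{eq: p join B} is defined at $\beta$, so $(\beta,B) \in \overline{\Gamma}_c$ forces $B = c\vee\beta$. Alternatively, the first two conditions of \eqref{eq: blowup}, combined with the pre-proof discussion showing that they amount to the rank condition $\mathrm{rank}\begin{bmatrix}\zeta & \nu_2^{5,3}(E(c))^\top\beta\end{bmatrix}\le 1$, uniquely determine $\zeta$ up to scaling, hence determine $B$.

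For case (2), where $\beta$ meets $c$ but is not the union of two lines through $c$, I would first observe that since $\beta$ is a planar conic meeting $c$, it lies in a unique plane $h_1$, and $\beta$ has a well-defined tangent line at $c$. Any cone $B$ containing $\beta$ must, because $\beta$ spans the plane $h_1$ as a curve of degree $2$, have $h_1$ as an irreducible component; write $B = h_1 \cup h_2$. The tangent-plane condition of \eqref{le: blowup} forces $h_2$ to be a tangent plane of $\beta$ along the residual line $B \cap h_2$ (passing through $c$), which by the tangent geometry of the conic at $c$ is exactly the condition that $h_2$ contain the tangent line to $\beta$ at $c$. Conversely, the explicit degeneration in the proof of \Cref{le: blowup} (moving the center $c_n \to c$ while keeping a generic line in $h_2$ through $c$ in the ruling of $c_n \vee \beta$) shows that every such pair $(\beta, h_1 \cup h_2)$ is actually attained as a limit, so lies in $\overline{\Gamma}_c$.

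For case (3), when $\beta$ is the union of two lines meeting at $c$, I would note that $B$ must contain $\beta$ by \Cref{le: blowup}, and that both remaining conditions (every line $c\vee X$ meets $\beta$ at $c$, and every tangent plane of $B$ is a tangent plane of $\beta$, using that $\beta$ is singular at $c$ so every plane through $c$ is tangent to $\beta$ there) are automatic. Conversely, the approximation argument in the last part of the proof of \Cref{le: blowup}, which uses an irreducible deformation $B_n \to B$ together with a family of transverse planes $h_s \to h$ producing conics $\beta_{s,n}\to \beta$ away from $c$ with $c \vee \beta_{s,n} = B_n$, shows that every cone $B$ containing $\beta$ appears. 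No step should be a genuine obstacle here, since all of the geometric content is already in \Cref{le: blowup}; the only mildly delicate point is checking in case (2) that tangency of the second plane $h_2$ to $\beta$ is equivalent to $h_2$ containing the tangent line of $\beta$ at $c$, which is immediate from the fact that a plane is tangent to a planar conic at a smooth point exactly when it contains the tangent line there.
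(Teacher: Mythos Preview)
Your proposal is correct and matches the paper's approach exactly: the paper states that \Cref{prop: blowup} ``is a direct result of the proof above'' (i.e., of \Cref{le: blowup}) and gives no further argument, while you spell out how each of the three cases is read off from that proof. Your added detail---particularly the verification in case~(2) that the tangent-plane condition amounts to $h_2$ containing the tangent line of $\beta$ at $c$---is a faithful expansion of what is already implicit in the proof of \Cref{le: blowup}.
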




\section{Conic Multiview Varieties}\label{s: Conic MV}
In this section, we focus our study on set-theoretic questions related to the \textit{conic multiview variety} $\mathcal C_{\Ca,2}$. In \Cref{ss: two-view}, we study two-view geometry and find the defining ideal of $\mathcal C_{\Ca,2}$ given two cameras. In \Cref{ss: three-view}, we provide examples and figures of different scenarios, given three cameras. In \Cref{ss: n-view geo}, we characterize the simplest possible set-theoretic description of conic multiview varieties based on the geometry of the centers.

By the bijection from image curves to back-projected cones, we sometimes identify $\mathcal C_{\Ca,2}$ with the associated variety of back-projected cones. As a visual guide for the geometry of degree-2 cones, we recommend  
\url{https://www.grad.hr/geomteh3d/prodori/prodor_stst_eng.html}, created by Sonja Gorjanc. All cones in this section are degree-2.


\subsection{Two-view geometry}\label{ss: two-view} Let $\Ca=(C_1,C_2)$ be a camera arrangement of two cameras with distinct centers and suppose that $(h,\alpha)\in (\PP^3)^*\times \PP^{5}$ is generic. Write $(\gamma,\delta)=\Phi_{\Ca,2}(h,\alpha)$. For each point $x\in \PP^2$ with $\alpha^\top \nu_2^2(x)=0$, we have by construction that
\begin{align}
    \gamma^\top\nu_2^{2}(C_1H(h)x)=0 \quad \textnormal{ and }\quad \delta^\top\nu_2^{2}(C_2H(h)x)=0.
\end{align}
Further, using the homography $H_{2,1}(h)$ defined in \Cref{ss: epip geo},
\begin{align}
    \gamma^\top\nu_2^{2}(C_1H(h)x)=0 \quad \textnormal{ and }\quad \delta^\top\nu_2^{2}(H_{2,1}(h))\nu_2^{2}(C_1H(h)x)=0.
\end{align}
Since $C_1H(h)$ is an invertible $3\times 3$ matrix, this equality implies that
\begin{align}\label{eq: homog app}
    \gamma = \nu_d^{2,2}(H_{2,1}(h))^\top \delta.
\end{align}
Eliminating $h$ from this condition gives us our main theorem on two-view geometry. We give geometric interpretations for the set-theoretic description in \Cref{ss: n-view geo}.

\begin{theorem}\label{thm: main two view} For the camera arrangement $\Ca=(\begin{bmatrix}
    0 & I
\end{bmatrix},\begin{bmatrix}
    I & 0
\end{bmatrix})$, we have that $(\gamma,\delta)\in \mathcal C_{\Ca,2}$ if and only if
\begin{align}\label{eq: two view ideal}
     \mathrm{rank} \begin{bmatrix}
        \gamma_4^2-4\gamma_3\gamma_5 & \gamma_2\gamma_4-2\gamma_1\gamma_5 & \gamma_2^2-4\gamma_0\gamma_5\\
        \delta_2^2-4\delta_0\delta_5 & \delta_1\delta_2-2\delta_0\delta_4 & \delta_1^2-4\delta_0\delta_3
    \end{bmatrix}\le 1.
\end{align}
The ideal generated by \eqref{eq: two view ideal} is the defining ideal of $\mathcal C_{\Ca,2}$.
\end{theorem}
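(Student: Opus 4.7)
The plan is to prove the set-theoretic equality $\mathcal{C}_{\Ca,2}=V(I)$ with $I$ the ideal generated by the three $2\times 2$ minors in \eqref{eq: two view ideal}, and then upgrade to the ideal-theoretic statement $I=I(\mathcal{C}_{\Ca,2})$ by establishing primality of $I$.

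For the forward inclusion $\mathcal{C}_{\Ca,2}\subseteq V(I)$, I would apply \eqref{eq: homog app}. For the given cameras one checks $C_1H(h)=h_0 I$, so the homography $H_{2,1}(h)=(C_2H(h))(C_1H(h))^{-1}$ equals $h_0^{-1}C_2H(h)$, a sparse matrix whose entries are linear in $h$. Expanding $\gamma=\nu_2^{2,2}(H_{2,1}(h))^\top\delta$ writes each $\gamma_i$ as a concrete quadratic form in $h$ with coefficients linear in $\delta$. A direct substitution into the rank matrix shows that each entry of the first row factors as $h_0^2h_3^2$ times the corresponding entry of the second row; hence the rank is at most $1$ on $\IM \Phi_{\Ca,2}$ and, by closure, on $\mathcal{C}_{\Ca,2}$.

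For the reverse inclusion $V(I)\subseteq \mathcal{C}_{\Ca,2}$, I would combine a dimension count with irreducibility. The three $2\times 2$ minors of a $2\times 3$ matrix generically have codimension $2$, so $\dim V(I)\geq 8$; since $\mathcal{C}_{\Ca,2}$ is irreducible of dimension $8$ by \Cref{prop: dim} and contained in $V(I)$, it is a top-dimensional component. It suffices to show that $V(I)$ is irreducible. On the open subset where the second row is nonzero, the rank condition provides a scalar $\lambda$ with row 1 $=\lambda\cdot$ row 2, and using the explicit formulas from the previous paragraph one can reconstruct a preimage $(h,\alpha)$ under $\Phi_{\Ca,2}$, showing that this open subset lies in $\mathcal{C}_{\Ca,2}$. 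The complementary locus, where the second row vanishes, consists of conics $\delta$ degenerate at the epipole in the second image; a direct dimension count shows it is strictly lower-dimensional and contributes no additional component of dimension $8$.

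The main obstacle is the ideal-theoretic upgrade. Given set-theoretic equality, primality of $I$ is equivalent to radicality, and because the matrix entries are specific quadratic forms rather than algebraically independent variables, classical primality theorems for determinantal ideals of generic matrices do not apply directly. I would verify radicality computationally in \texttt{Macaulay2}, either by computing a primary decomposition of $I$ or by checking that $I$ agrees with the kernel of the ring homomorphism associated to an explicit rational parametrization of $\mathcal{C}_{\Ca,2}$; the dominance of such a parametrization combined with the inclusion $I\subseteq I(\mathcal{C}_{\Ca,2})$ then yields $I=I(\mathcal{C}_{\Ca,2})$.
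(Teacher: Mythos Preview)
Your forward inclusion and your final paragraph are both sound, and in fact your backup plan in the last paragraph --- computing the kernel of the ring map associated to $\Phi_{\Ca,2}$ and comparing it with $I$ --- is essentially what the paper does. The paper sets up the incidence condition $\gamma = \nu_2^{2,2}(H_{2,1}(h))^\top \delta$ with $h_0=1$, eliminates $h_1,h_2,h_3$ in \texttt{Macaulay2}, obtains two prime components over $\QQ$ (the ideal $J$ of \eqref{eq: two view ideal} and the discriminant hypersurface of $\delta$), observes that the multiview variety must sit inside $J$ since the other component depends only on $\delta$, and finishes with a short density argument that the $\CC$-prime ideal is generated by the $\QQ$-prime ideal. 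That single elimination gives the set-theoretic equality and primality simultaneously, so there is no need for a separate reverse-inclusion argument.

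The genuine gap in your outline is the second paragraph. Knowing that the first row equals $h_0^{2}h_3^{2}$ times the second row after substituting the parametrization tells you only that the scalar $\lambda$ determines $h_3^{2}$ (with $h_0=1$); it gives you no handle on $h_1,h_2$, and it does not by itself imply that the remaining equations in $\gamma=\nu_2^{2,2}(H_{2,1}(h))^\top\delta$ can be solved. Concretely, for fixed generic $\delta$ the rank condition cuts out a $3$-dimensional locus in $\gamma$, and the map $h\mapsto \gamma(h,\delta)$ also has $3$-dimensional image, but matching dimensions is not the same as containment; asserting ``one can reconstruct a preimage $(h,\alpha)$'' is precisely the statement you are trying to prove. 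Geometrically, the two rows record the pencils of epipolar tangent lines to $\gamma$ and $\delta$, and the rank condition says these pencils match under the identification of epipolar lines through the baseline; turning that necessary condition into a sufficient one is the actual content of the theorem and requires either the elimination computation or a further argument you have not supplied. Since your third paragraph already proposes the computation that settles this, the cleanest fix is to drop the attempted synthetic reverse inclusion and let the \texttt{Macaulay2} verification (either via primary decomposition of $I$ or, as the paper does, via elimination) carry both the set-theoretic and the ideal-theoretic claims.
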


\begin{proof} For the given camera arrangement, we observe that $C_1H(h)=I$ and therefore, the homograpy is simply 
\begin{align}
    C_2H(h)=\begin{bmatrix}
        -h_1 & -h_2 & -h_3\\
        h_0 & 0 & 0\\
        0 & h_0 & 0
    \end{bmatrix}.
\end{align}
In \texttt{Macaulay2}, we construct the ideal corresponding to the condition \Cref{eq: homog app}. Setting $h_0=1$ and eliminating $h_1,h_2,h_3$, we are left with two prime components over $\QQ$, namely the 8-dimensional ideal $J$ generated by \eqref{eq: two view ideal} and the ideal generated by the single equation
\begin{align}\label{eq: union of planes}
    \delta_2^2\delta_3-\delta_1\delta_2\delta_4+\delta_0\delta_4^2 +\delta_1^2\delta_5-4\delta_0\delta_3\delta_5=0.
\end{align}
The conic multiview variety lies in one of the two irreducible components, and it cannot be contained in the latter one as it only depends on $\delta$. 

The defining ideal of a multi-projective variety is the set of polynomials that vanish on it. See \cite[Proposition 2.3]{agarwal2022atlas} for a more detailed discussion. Let $I_\CC$ be the prime ideal of $\mathcal C_{\Ca,2}$ over $\CC$ and let $I_\QQ$ be the prime ideal over $\QQ$. By the above, $J=I_\QQ$ and we show that $I_\CC$ is generated by $I_\QQ$. For this, we consider the image $\Phi_{\Ca,2}(\QQ^4,\QQ^6)$. Note that by assumption on $\Ca$, $\Phi_{\Ca,2}$ is a polynomial map with rational coefficients. Since the rational numbers are dense in the complex numbers, a polynomial $f$ vanishes on $\Phi_{\Ca,2}(\QQ^4,\QQ^6)$ if and only if it vanishes on $\mathcal C_{\Ca,2}$. We can write $f=\sum e_i f_i$, where $f_i$ are polynomials with rational coefficients such that no two $e_i$ differ by multiplication of a rational number. By this property of $e_i$, if $f\in I_\CC$, then each $f_i$ vanishes on $\Phi_{\Ca,2}(\QQ^4,\QQ^6)$. In other words, $f$ is a complex linear combination of elements in $I_\QQ$, implying that $I_\CC$ is generated by $I_\QQ$. 
\end{proof}


In general, we can not assume that the camera arrangement is as simple as in \Cref{thm: main two view}. Let $F^{12}$ be a fundamental matrix. By \Cref{prop: std sol}, we may assume that a solution is of the form $C_1=A_1\begin{bmatrix}
    0 & I
\end{bmatrix}$ and $C_2=A_2\begin{bmatrix}
    I & 0
\end{bmatrix}$ for two full-rank $3\times 3$ matrix $A_1$ and $A_2$, expressed via the fundamental matrix. Write $\Ca=(C_1,C_2)$. Given $h$, let $H_{2,1}(h)$ be the homography with respect to $(\begin{bmatrix}
    0 & I
\end{bmatrix},\begin{bmatrix}
    I & 0
\end{bmatrix})$. Then one can check that the homography between $C_1$ and $C_2$ is
\begin{align}
    A_2H_{2,1}(h)A_1^{-1}.
\end{align}
Finally, from \eqref{eq: homog app}, we get that $(\gamma,\delta)\in \mathcal C_{\Ca,2}$ vanish on the ideal \eqref{eq: two view ideal} after applying the change of coordinates $\gamma\mapsto \nu_2^{2,2}(A_1)^\top\gamma$ and $\delta\mapsto \nu_2^{2,2}(A_2)^\top\delta$.

An alternative approach to arriving at \eqref{eq: two view ideal} is to consider the condition that for $(\gamma,\delta)\in \mathcal C_{\Ca,d}$, the back-projected cones of $\gamma$ and $\delta$ meet in a degree-2 curve. Such a curve lies in a plane, say $h=(h_0:h_1:h_2:h_3)$. The set of $X\in \PP^3$ lying in $B(\gamma)$, $B(\delta)$ and $h$ is determined by the system
\begin{align}\begin{aligned}
   0&= \gamma_0 X_1^2 + \gamma_1 X_1X_2 +\gamma_2 X_1X_3+\gamma_3 X_2^2+\gamma_4 X_2X_3 +\gamma_5 X_3^2,\\
    0&=\delta_0 X_0^2 + \delta_1 X_0X_1 +\delta_2 X_0X_2+\delta_3 X_1^2+\delta_4 X_1X_2 +\delta_5 X_2^2,\\
    0&=h_0X_0+h_1X_1+h_2X_2+h_3X_3.
\end{aligned}
\end{align}
For $h_0\neq 0$, we can plug in $X_0=-(h_1X_1+h_2X_2+h_3X_3)/h_0$ into the second equation. For the solution set to be a degree-2 curve, we must have that the two equations that are left are projectively equal. One can check that this condition is equivalent to 
\begin{align}
   \mathrm{rank} \begin{bmatrix}\gamma & \nu_d^{2,2}(H_{2,1}(h))^\top \delta\end{bmatrix}\le 1.
\end{align}

\begin{remark} 
The condition \eqref{eq: two view ideal} trivially holds if one of the rows is zero. To understand when this happens, consider the equalities
\begin{align}\label{eq: gamma id}
\gamma_4^2=4\gamma_3\gamma_5,\quad \gamma_2\gamma_4=2\gamma_1\gamma_5,\quad \gamma_2^2=4\gamma_0\gamma_5.
\end{align}
These are exactly the conditions that imply that $\gamma$ is either a double line or the union of two lines meeting at the epipole $e_1^2=(0:0:1)$, which we recall to be the image of the center $c_2$ in camera $C_1$. To see this, first observe that the double line $(ux_0+vx_1+wx_2)^2$ corresponds to the vector $(u^2:2uv:2uw:v^2:2vw:w^2)\in \PP^5$. If $\gamma_5\neq 0$, then we have by \eqref{eq: gamma id} that
\begin{align}
    \gamma =(\gamma_2^2:2\gamma_2\gamma_4:2\gamma_2(2\gamma_5):\gamma_4^2:2\gamma_4(2\gamma_5):(2\gamma_5)^2),
\end{align}
meaning $\gamma$ is a double line. If $\gamma_5=0$, then $\gamma_2=\gamma_4=0$. This implies that 
\begin{align}
    \gamma^\top \nu_2^2(x)=\gamma_0x_0^2+\gamma_1x_0x_1+\gamma_3x_1^2=0,
\end{align}
which has precisely two solutions in $(x_0:x_1)$. As $x_2$ is free to vary, we get two lines. They both meet as $x_2\to \infty$, i.e. at the point $(0:0:1)$.

Similarly, one can check that
\begin{align}
\delta_2^2=4\delta_0\delta_5,\quad \delta_1\delta_2=2\delta_0\delta_4,\quad \delta_1^2=4\delta_0\delta_3
\end{align}
holds if and only if $\delta$ is a double line or a union of two lines meeting at the epipole $e_2^1$.
\end{remark}


\begin{remark} The condition \eqref{eq: union of planes} is satisfied if and only if $\delta$ is a union of two lines. To see this, observe first that
\begin{align}
    \delta^\top \nu_2^2(x)=x^\top\begin{bmatrix}
      \delta_0 & \delta_1/2 & \delta_2/2\\
      \delta_1/2 & \delta_3 & \delta_4/2\\
      \delta_2/2 & \delta_4/2 & \delta_5
    \end{bmatrix} x.
\end{align}
This curve is reducible if and only if the $3\times 3$ matrix is at most rank-2, and its determinant is exactly \eqref{eq: union of planes}.
\end{remark}


\subsection{Three-view geometry}\label{ss: three-view} We collect helpful examples of the geometry of cones focusing on three views. In \Cref{ex: three irred} we show that three irreducible cones of different centers can meet in an irreducible degree-4 curve. In \Cref{ex: collin cam}, we discuss tuples of three or more collinear centers, in relation to the conic multiview variety. For an arrangement $\Ca$, let $B_i$ be generic double planes through $c_i$. These cones meet pairwise in degree-2 curves. However, their triplewise intersection are 1-dimensional. In \Cref{ex: pairwise}, we see that even for irreducible cones, the fact that they pairwise meet in degree-2 curves does not imply that all cones meet in a degree-2 curve.

\begin{example}\label{ex: three irred} Take three non-collinear centers $c_1,c_2$ and $c_3$. Take $B_1$ and $B_2$ to be cones through $c_3$ that are tangent to the plane $h$ spanned by the three centers. Chosen correctly, the intersection of $B_1$ and $B_2$ is an irreducible degree-four curve $\beta$ with one singular point at $c_3$. 
The projection of $\beta$ with respect to $c_3$, leaves an irreducible degree-2 curve, since $\beta$ is irreducible and any plane through $c_3$ meets $\beta$ in at most two points away from the singular point $c_3$.
\end{example}


\begin{example}\label{ex: collin cam} Suppose $c_1,\ldots,c_n$ are disjoint collinear centers and denote by $E$ this line, called the \textnormal{baseline}. Let $\beta$ denote the double line of $E$. We consider the set of cones $B_1,\ldots,B_n$ that meet in $\beta$ and calculate its dimension. That they meet in a double line rather than a reduced line means that the cones are tangent to each other along this line. Let $h$ denote a generic plane. The cones $B_i$ are uniquely determined by their degree-2 intersections curves $\beta_i$ with this plane. These must pass through the intersection $e$ of $h$ and $E$, and must all have a common tangent at $e$. Fixing an intersection point and a tangent allows for 3 degrees of freedom in the choice of each $\beta_i$. As the tangent line has 1 degrees of freedom, the dimension of this set is $1+3n$. If $n\ge 3$, then this set of cones cannot lie in the conic multiview variety, as its dimension is $8$.   
\end{example}

\begin{figure}
    \begin{center}
\scalebox{-1}[1]{\includegraphics[width = 0.43\textwidth]{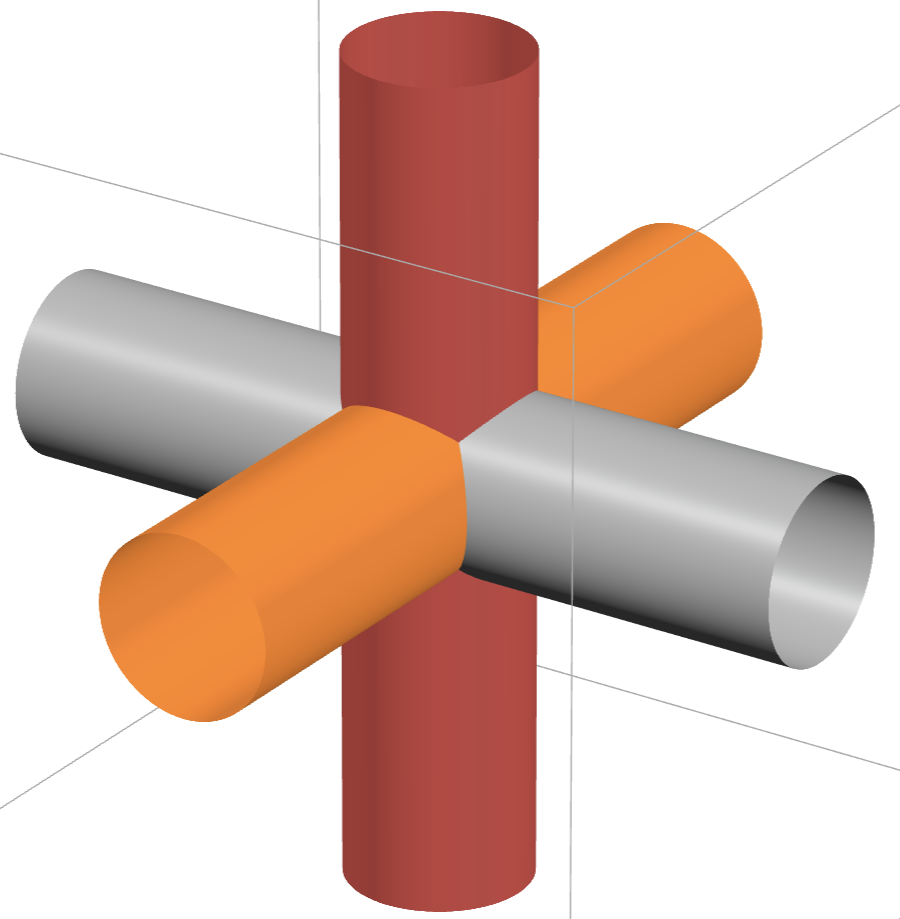}}
    \hspace{0.5cm}
\scalebox{-1}[1]{\includegraphics[width = 0.43\textwidth]{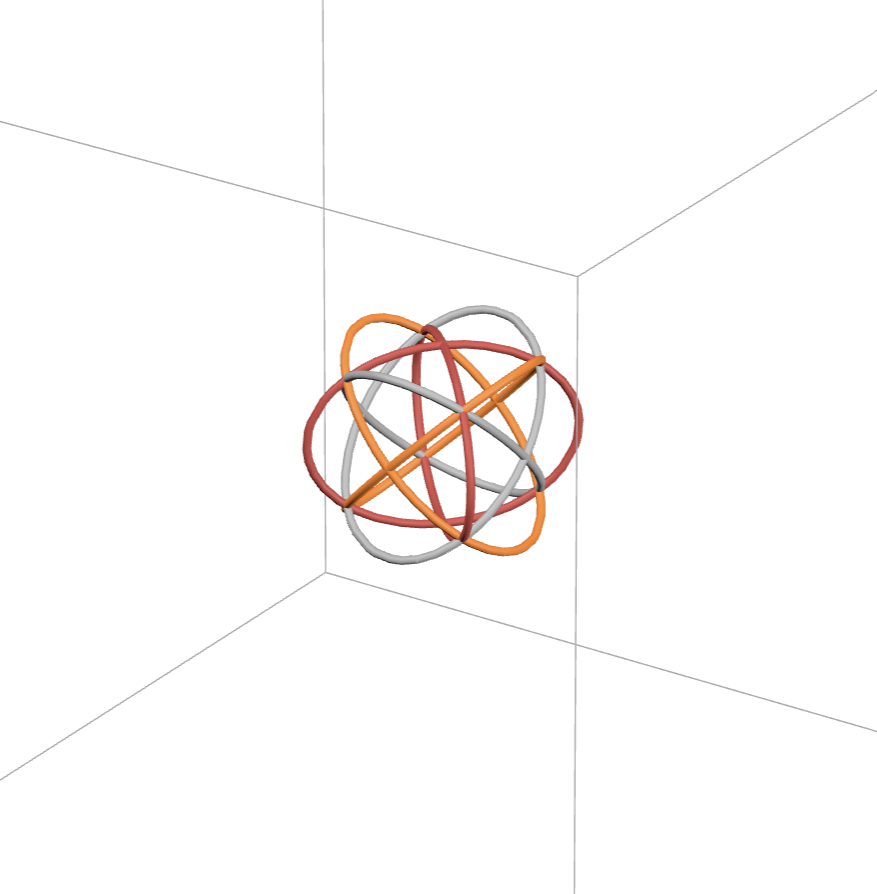}}
\end{center}
    \caption{On the left, three cylinders that intersect pairwise in the union of two irreducible conic. On the right, these intersection curves have been plotted. We designate the intersection curves with the third color, i.e., the color that is not present among the intersecting cylinders. Created using \cite{desmos}.}
    \label{fig: cylinders} 
\end{figure}



\begin{example}\label{ex: pairwise} Assume that the centers $c_1,\ldots,c_n$ lie in a plane $H$. In the affine chart defined by $H$, we may take back-projected cones $B_1,\ldots,B_n$ to be cylinders. Assume they all have the same radii and their axes meet in a common point. In this way, we construct $B_1,\ldots,B_n$ that all meet pairwise in two irreducible conics, but such that no three meet in a conic. This is illustrated in \Cref{fig: cylinders}, for $n=3$.
\end{example}


\subsection{$n$-view geometry}\label{ss: n-view geo} Our main theorem on $n$-view geometry for conic multiview varieties is the next result. We believe the main ideas can be extended to higher degree planar curves. 

\begin{theorem}\label{thm: set-theo} The conic multiview variety $\mathcal C_{\Ca,2}$ is cut out by the condition that there is a degree-$2$ curve $\beta$ in the intersection of all $B(\gamma_i)$ such that 
\begin{align}
    (\beta,B(\gamma_i))\in \overline{\Gamma}_{c_i} \textnormal{ for every }i=1,\ldots,n
\end{align}
if and only if all centers are distinct, no three of them lie on a line and no eight of them lie on a conic.  
\end{theorem}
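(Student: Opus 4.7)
The plan is to prove the biconditional by first establishing the containment $\mathcal{C}_{\Ca,2} \subseteq \mathcal{V}$, where $\mathcal{V}$ denotes the set of tuples satisfying the stated intersection condition, and then analyzing when the reverse inclusion holds. The containment is straightforward: for generic $(h,\alpha)$ the plane conic $\beta := \iota_2(h,\alpha)$ avoids every camera center, so $B(\gamma_i) = c_i \vee \beta$ and \Cref{prop: blowup}(1) places $(\beta, B(\gamma_i))$ in $\overline{\Gamma}_{c_i}$. Taking Zariski closures gives $\mathcal{C}_{\Ca,2} \subseteq \mathcal{V}$, and this containment holds regardless of the genericity hypotheses.

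For the "if" direction, assume the three genericity conditions and take $(\gamma_1,\ldots,\gamma_n) \in \mathcal{V}$ with witness conic $\beta \subseteq \bigcap_i B(\gamma_i)$. If $\beta$ avoids every center, \Cref{prop: blowup}(1) forces $B(\gamma_i) = c_i \vee \beta$, so $\beta$ projects to $\gamma_i$ under $C_i$ and the tuple lies in $\mathrm{Im}\,\Phi_{\Ca,2}$. Otherwise, I would construct a one-parameter family $\beta_t$ of conics missing all centers with $\beta_t \to \beta$ and $c_i \vee \beta_t \to B(\gamma_i)$ for every $i$ simultaneously, which by closure places $(\gamma_1,\ldots,\gamma_n)$ in $\mathcal{C}_{\Ca,2}$. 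The existence of such a combined deformation is precisely what the genericity hypotheses provide: by \Cref{prop: blowup}(2) each centre $c_i \in \beta$ contributes at most a single tangent-plane prescription on $B(\gamma_i)$, and the assumptions that no three centers are collinear and no eight lie on a conic ensure these prescriptions are compatible inside the 8-dimensional family of plane conics, avoiding the degenerate case (3) of \Cref{prop: blowup} where the witness becomes a line pair.

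For the "only if" direction, I would exhibit tuples in $\mathcal{V} \setminus \mathcal{C}_{\Ca,2}$ whenever one genericity assumption fails. Coincident centers yield immediate counterexamples, since the two corresponding image conics must agree, a codimension-5 condition stronger than what $\mathcal{V}$ imposes. If three centers lie on a line $L$, the construction in \Cref{ex: collin cam} supplies cones tangent along $L$ whose common witness is the double line $2L$ with $1+3n$ degrees of freedom, strictly exceeding $\dim \mathcal{C}_{\Ca,2} = 8$ once $n \ge 3$. If eight centers lie on a conic $\beta_0$, then \Cref{prop: blowup}(2) allows us to independently prescribe a tangent plane to $B(\gamma_i)$ at each such $c_i$ distinct from the plane of $\beta_0$, generating back-projected cones with common witness $\beta_0$ whose tuple does not arise from projecting any single conic—here the number eight is calibrated so that the independent tangent choices produce a family of dimension exceeding the fibre of $\Phi_{\Ca,2}$ over such configurations.

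The main obstacle is the simultaneous-deformation argument in Case (b) of the "if" direction. The blowup $\overline{\Gamma}_{c_i}$ only guarantees a sequence $\beta_n^{(i)} \to \beta$ witnessing one cone at a time, and gluing these into a single sequence $\beta_n$ valid for all $i$ requires analyzing the global incidence correspondence
\begin{equation*}
\mathcal{W} := \bigl\{(\beta,\gamma_1,\ldots,\gamma_n) : (\beta, B(\gamma_i)) \in \overline{\Gamma}_{c_i} \text{ for all } i\bigr\}
\end{equation*}
and showing that its projection onto $\mathcal{V}$ has fibres meeting the open locus of conics avoiding every center. This is a fibre-dimension count in which the three genericity conditions enter precisely to keep the tangency constraints at coplanar and collinear centers from forcing the candidate witnesses out of the smooth part of the conic Hilbert scheme.
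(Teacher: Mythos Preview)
Your overall architecture matches the paper's: establish $\mathcal{C}_{\Ca,2} \subseteq \mathcal{V}$ unconditionally, construct tuples in $\mathcal{V}\setminus\mathcal{C}_{\Ca,2}$ by dimension counts whenever a hypothesis fails, and for the forward implication produce a simultaneous deformation $\beta_n\to\beta$ with $c_i\vee\beta_n\to B(\gamma_i)$. Your ``only if'' sketches (coincident centers, three on a line via \Cref{ex: collin cam}, eight on a conic via tangent-plane freedom) are essentially the paper's arguments.

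The genuine gap is the ``if'' direction, which you yourself identify as the main obstacle but do not resolve. The paper does \emph{not} proceed via a fibre-dimension count on your correspondence $\mathcal{W}$; it carries out a concrete case analysis on the type of the witness $\beta$:
\begin{itemize}
\item $\beta$ irreducible: at most seven centers lie on it, and each corresponding $B_i$ is the union of the plane of $\beta$ with a tangent plane $H_i$. The paper chooses a generic line $L_i\subset H_i$ through $c_i$ and invokes a dedicated result (\Cref{thm: seven lines}, built on \Cref{prop: finitely}): through seven points on an irreducible conic and seven generic lines through them, there is a one-parameter family of conics meeting all $L_i$ and limiting to $\beta$ while avoiding the centers. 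This seven-lines statement is the technical core and is not a consequence of a dimension count; it requires showing the relevant locus in $\mathrm{Ch}_{\mathrm{plane}}(2,\PP^3)$ is at least one-dimensional near $\beta$ and that only finitely many members hit any $X_i$.
\item $\beta$ a union of two distinct lines: at most four centers lie on it; the deformation reduces to the line-multiview construction.
\item $\beta$ a double line: at most two centers lie on it; the paper uses the residual irreducible conic in $B_1\cap B_2$ to build the sequence.
\end{itemize}

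Two specific issues with your sketch: first, the genericity hypotheses do \emph{not} ``avoid the degenerate case (3)'' of \Cref{prop: blowup}---the witness $\beta$ may well be a line pair or a double line, and these cases must be handled, not excluded. Second, a fibre-dimension count on $\mathcal{W}$ cannot by itself show that every fibre meets the open locus of conics avoiding all centers: the fibre over a given $(\gamma_1,\ldots,\gamma_n)$ could be positive-dimensional yet entirely contained in the locus of conics through some $c_i$. The paper's explicit constructions sidestep this, and you would need something of comparable strength to \Cref{thm: seven lines} to make your approach go through.
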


We say that the multivew variety $\mathcal C_{\Ca,2}$ is \textit{simple} if these conditions hold. This theorem is the degree-2 version of the main set-theoretic result on line multiview varieties \cite[Theorem 2.5]{breiding2023line}. Just as in the proof of that theorem, we work with structured sequences of world features. Here, surfaces with one family of conic curves and one family of lines replaces the smooth quadric surfaces and their two families of lines used for line multiview varieties. A key insight of this paper is that it is not enough for the back-projected cones to meet in a degree-2 curve, but the cones have to be ``consistent'' with respect to $\beta$, as captured by the blowups $\overline{\Gamma}_c$.

\begin{lemma}\label{le: five p} Let $e_1,\ldots,e_5$ be five points in the plane. \textnormal{(1)} If no three are collinear, then there is a unique conic through each point, and this curve is irreducible. \textnormal{(2)} If no four are collinear, then there is a unique (and potentially reducible) conic through each point.
\end{lemma}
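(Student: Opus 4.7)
The space of conics in $\PP^2$ is identified with $\PP^5$ via coefficients, and passing through a point $e\in\PP^2$ imposes a single linear condition $\gamma^\top\nu_2^2(e)=0$ on $\gamma\in\PP^5$. So the conics through $e_1,\ldots,e_5$ form the (projective) kernel of the $5\times 6$ matrix with rows $\nu_2^2(e_i)^\top$, which has dimension at least $1$; \emph{existence} is therefore automatic in both parts. The content is \emph{uniqueness} (and in part (1) irreducibility).

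For uniqueness I would argue via Bézout. Suppose $C$ and $C'$ are two distinct conics, each containing all five points. If both are irreducible they meet in at most $2\cdot 2=4$ points, so they cannot share five. If exactly one, say $C$, is reducible as $L_1\cup L_2$ while $C'$ is irreducible, then $C\cap C'=(L_1\cap C')\cup (L_2\cap C')$ has at most $2+2=4$ points, again impossible. If both are reducible, write $C=L_1\cup L_2$ and $C'=L_1'\cup L_2'$; if they share no common line, each pairwise intersection $L_i\cap L_j'$ is a single point, yielding at most $4$ intersections. The only remaining possibility is that $C$ and $C'$ share a line $L$, say $C=L\cup M$ and $C'=L\cup M'$ with $M\ne M'$; then $C\cap C'=L\cup\{M\cap M'\}$, so every one of the five points lies either on $L$ or equals the unique point $M\cap M'$, forcing at least four of the five to lie on $L$. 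This yields the key dichotomy: \emph{if two distinct conics contain the five points, then four of the points are collinear.}

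Part (2) is now immediate: the hypothesis rules out four collinear points, so no such pair $C\ne C'$ exists, and the conic is unique. For part (1), the no-three-collinear hypothesis is stronger than no-four-collinear, so uniqueness follows from part (2); it remains to argue irreducibility. If the unique conic through $e_1,\ldots,e_5$ were a union $L_1\cup L_2$ of two (possibly equal) lines, then by the pigeonhole principle at least three of the five points would lie on one of the $L_i$, contradicting the assumption that no three are collinear.

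I do not anticipate a serious obstacle; the only step that needs a little care is enumerating the reducible cases in the Bézout argument to pin down exactly when two distinct conics can meet in five points, but the case analysis above is short and elementary.
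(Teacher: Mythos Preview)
Your proof is correct and complete; the only implicit assumption is that the five points are pairwise distinct, which is also tacitly assumed in the paper and in how the lemma is used.

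Your route differs from the paper's. You argue uniqueness by a B\'ezout-style case analysis on the reducibility types of two putative conics, extracting the dichotomy ``two distinct conics through five points force four collinear points,'' from which both (2) and the uniqueness in (1) follow at once. The paper instead proves (1) by a \emph{pencil} argument: given two distinct conics $\gamma_1,\gamma_2$ through $e_1,\ldots,e_5$, it picks a sixth point $e_6$ on the line $e_1e_2$, takes a linear combination $\gamma$ of $\gamma_1,\gamma_2$ passing through $e_6$, observes that $\gamma$ now contains the three collinear points $e_1,e_2,e_6$ and hence is reducible, and then pigeonholes to force three of the original five onto a line. For (2) the paper reduces to (1) when no three are collinear, and in the ``exactly three collinear'' case argues directly that the conic must split as the line through those three together with the line through the remaining two. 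Your approach is more uniform (one dichotomy handles both parts) and leans on B\'ezout; the paper's approach stays entirely within elementary linear algebra and the pencil trick, avoiding any appeal to intersection theory.
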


\begin{proof} $ $

$(1)$ For a conic $\gamma\in \PP^5$, the condition that $e_i$ lies on $\gamma$ is a linear constraint. Therefore, there is at least one $\gamma$ that contains all $e_1,\ldots,e_5$. Assume that $\gamma_1$ and $\gamma_2$ are two distinct conics through each $e_i$, meaning
\begin{align}
    \gamma_j^\top \nu_2^2(e_i)= 0 \textnormal{ for each }j=1,2 \textnormal{ and }i=1,\ldots,5.
\end{align}
Let $e_6$ be any point on the line spanned by $e_1$ and $e_2$, apart from $e_1$ and $e_2$ themselves. Then $\gamma_j^\top \nu_2^2(e_6)\neq 0$, since otherwise, $\gamma_j$ contains three points on a line, and must be a union of two lines. By the pigeonhole principle, at least three of $e_1,\ldots,e_5$ would then have to be linear; a contradiction. However, it follows that there is a linear combination $\gamma$ of $\gamma_1$ and $\gamma_2$ such that $\gamma^\top\nu_2^2(e_6)=0$. Any such linear combination also has $ \gamma^\top \nu_2^2(e_i)= 0$ for every $i$, again implying by the pidgeonhole principle that at least three of $e_1,\ldots,e_5$ are collinear.

$(2)$ This statement is true if no three $e_i$ are collinear, by $(1)$. If exactly three are collinear, then the unique conic through $e_1,\ldots,e_5$ is the the union of the line spanned by these three collinear points and the line spanned by the two others.
\end{proof}

\begin{proposition}\label{prop: finitely} Let $X_1,X_2,\ldots,X_7$ be disjoint points of $\PP^3$ not contained in a reducible conic. Let $L_2,\ldots,L_7$ be six generic lines in $\PP^3$ through $X_2,\ldots,X_7$, respectively. There are at most finitely many conic curves meeting $X_1,L_2,\ldots,L_7$.  
\end{proposition}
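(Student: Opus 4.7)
The plan is to bound the dimension of an incidence variety and apply the fiber dimension theorem. Define
\[
Z := \{(\beta, L_2, \ldots, L_7) : \beta \text{ is a plane conic through } X_1,\ X_i\in L_i,\ \beta\cap L_i\neq \emptyset\text{ for each } i\}
\]
inside $\mathrm{Ch}_{\mathrm{plane}}(2,\PP^3)\times \prod_{i=2}^7 \{\text{lines through }X_i\}$. The fiber of the projection $p_2\colon Z\to (\PP^2)^6$ over $(L_2,\ldots,L_7)$ is exactly the set of conics we want to count, so it suffices to show $\dim Z\le 12$; then by the fiber dimension theorem applied to each irreducible component, together with upper semi-continuity, the generic fiber of $p_2$ is at most zero-dimensional, i.e.\ finite.

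To bound $\dim Z$, consider the projection $p_1\colon Z\to \mathcal F$, where $\mathcal F$ is the six-dimensional variety of plane conics through $X_1$ (parametrized by a plane $h$ through $X_1$ in $2$ dimensions and a conic in $h$ through $X_1$ in $4$ more). For $\beta\in\mathcal F$, the fiber of $p_1$ is a product whose $i$-th factor is the set of lines through $X_i$ meeting $\beta$: a $1$-dimensional family of rulings of the cone $X_i\vee\beta$ when $X_i\notin\beta$, and the full $2$-dimensional family of lines through $X_i$ when $X_i\in\beta$. Stratifying $\mathcal F$ by $S := \{i\in\{2,\ldots,7\} : X_i\in\beta\}$ and writing $d_S := \dim\{\beta\in\mathcal F : X_i\in\beta\text{ for all } i\in S\}$, the preimage in $Z$ of the stratum with $\{i : X_i\in\beta\}=S$ has dimension at most $d_S + |S| + 6$. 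Hence the task reduces to showing
\[
d_S + |S|\le 6 \quad\text{for every } S\subseteq\{2,\ldots,7\}.
\]

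I would verify this by case analysis on the configuration of the $1+|S|$ points $\{X_1\}\cup\{X_i : i\in S\}$, computing $d_S$ as (dimension of planes through the points) plus (dimension of conics in such a plane through the points). For $|S|\le 3$ the inequality is elementary, the tightest case being four collinear points, giving $d_S = 1 + 2 = 3$ and $|S| = 3$. The hypothesis of the proposition enters for larger $|S|$: if five of $X_1,\ldots,X_7$ were collinear on a line $L$, then $L$ together with the line through the remaining two points would be a reducible conic through all seven, contradicting the assumption; excluding this case bounds $d_S\le 6-|S|$ for $|S|\in\{4,5\}$. For $|S|=6$ the conic must contain all seven $X_i$, and the hypothesis forbids a reducible $\beta$, so either no such conic exists or $\beta$ is the unique irreducible one, giving $d_S\le 0$.

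The main obstacle is the case analysis just sketched, specifically identifying which collinearity configurations of the $X_i$ are ruled out by the hypothesis and keeping track of the boundary cases where $d_S+|S|=6$. Once $\dim Z\le 12$ is verified, the projection $p_2$ has target of dimension $12$, so the fiber dimension theorem concludes: on each component $Z_\alpha$ of $Z$, either the projection is not dominant (fiber empty over a generic point) or the generic fiber has dimension $\dim Z_\alpha - 12\le 0$. In either case the union over components gives a finite fiber over a generic $(L_2,\ldots,L_7)$.
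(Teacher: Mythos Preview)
Your incidence-variety approach is sound and genuinely different from the paper's. The paper fixes the plane of the conic first: it parametrises conics by planes $h$ through $X_1$, shows that each such plane carries at most one conic through $X_1$ meeting all $L_i$, and then proves that the set of admissible planes is finite by comparing two $1$-dimensional loci (one obtained by dropping $L_6$, the other by dropping $L_7$) and arguing that for generic $L_6,L_7$ these share no $1$-dimensional component. Your method instead lets the lines vary and bounds a global incidence dimension; it is more systematic and sidesteps the somewhat delicate ``no common component'' step. Either route yields the result once the case analysis is carried out.

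There is, however, a slip in your case analysis. You claim that if five of $X_1,\dots,X_7$ lie on a line $L$, then $L$ together with the line $M$ through the remaining two points is a reducible conic through all seven, contradicting the hypothesis. This only holds when $L$ and $M$ are coplanar; if $M$ is skew to $L$, their union is not a plane conic and the hypothesis, read literally, does not exclude this configuration. Worse, in that configuration the conclusion actually fails: for every plane $\Pi\supset L$ the curve $L\cup\overline{Y_6Y_7}$ (with $Y_i=L_i\cap\Pi$) is a conic through $X_1$ meeting each $L_i$, so one obtains a one-parameter family. This is exactly the stratum $|S|=4$ with $d_S=3$ that breaks your bound $d_S+|S|\le 6$.

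The paper's own proof implicitly uses the stronger reading ``$X_i$ do not lie on a union of two lines'' (it says so verbatim), which does exclude five collinear points regardless of skewness; and in the only application (the $X_i$ lie on an irreducible conic) no three are collinear anyway. Once you adopt that reading of the hypothesis, your inequality $d_S+|S|\le 6$ goes through in every stratum and the argument is complete.
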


\begin{proof} The set $\mathcal H_{X_1}$ of planes through $X_1$ is 2-dimensional. Fix a such plane $h$, write $H=\{X\in \PP^3: h^\top X=0\}$ and $Y_i:=L_i\cap H$. By genericity of $L_2,\ldots,L_7$, each choice subset of four lines has exactly two lines, say $K_1$ and $K_2$, that meet each of them. Further, as a consequence of the fact that $X_i$ do not lie on a union of two lines, all such $2{6\choose 2}$ lines meet exactly four $L_i$ and are disjoint. Therefore no plane contains two of them and any hyperplane $h$ through $X$ contains at most one tuple of $4$ collinear points $Y_i$. By \Cref{le: five p}, there is at most one conic curve meeting $X_1$ and all $L_i$ in $h$.

Now, let $h\in \mathcal H_{X_1}$ be generic. By the genericity of the lines $L_i$, $Y_i$ can be seen as generic points in the plane $h$. Therefore no six of the points $Y_1,\ldots,Y_7$ are contained in a conic. This shows that the set of planes $h\in \mathcal H_{X_1}$ containing conics that meet $X_1$ and five of the lines $L_2,\ldots,L_7$ is at most 1-dimensional. Let $D_1^{(1)},\ldots,D_s^{(1)}$ denote the (at most 1-dimensional) irreducible components of the set of $h\in \mathcal H_{X_1}$ containing conics meeting $X_1,L_2,\ldots,L_6$. Let $D_1^{(2)},\ldots,D_t^{(2)}$ denote the (at most 1-dimensional) irreducible components of the set of $h\in \mathcal H_{X_1}$ containing conics meeting $X_1,L_2,\ldots,L_5,L_7$. If $D_i^{(1)}$ and $D_j^{(2)}$ are 1-dimensional and coincide for some $i,j$, then they would be independent of $L_6$ and $L_7$ given $L_2,\ldots,L_5$. This is however a direct contradiction. The intersection of two components $D_i^{(1)}$ and $D_j^{(2)}$ is therefore at most 0-dimensional. Since each $h$ corresponds to at most one conic, we are done.
\end{proof}

\begin{lemma}\label{le: eucl} Let $\mathcal N$ be an irreducible variety, and let $\mathcal L$ be a union of proper subvarieties. The Euclidean closure of $\mathcal N\setminus\mathcal L$ is $\mathcal N$.
\end{lemma}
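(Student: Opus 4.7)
The plan is to reduce the statement to the classical density principle for irreducible complex varieties: Zariski density implies Euclidean density for constructible subsets. The argument splits naturally into two steps, one algebraic (producing a nonempty Zariski open subset) and one topological (upgrading Zariski to Euclidean density).

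First I would show that $U := \mathcal N \setminus \mathcal L$ is a nonempty Zariski open subset of $\mathcal N$. Writing $\mathcal L = \mathcal L_1 \cup \cdots \cup \mathcal L_k$ as a finite union of proper subvarieties, each intersection $\mathcal L_i \cap \mathcal N$ is a Zariski closed subset of $\mathcal N$. If $\mathcal L \cap \mathcal N$ equalled $\mathcal N$, then by irreducibility of $\mathcal N$ one of the finitely many $\mathcal L_i \cap \mathcal N$ would coincide with $\mathcal N$, contradicting that $\mathcal L_i$ is proper. Thus $U$ is nonempty and Zariski open, and by irreducibility of $\mathcal N$ it is Zariski dense, i.e.\ its Zariski closure equals $\mathcal N$.

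Next I would invoke the classical fact that over $\mathbb{C}$, the Zariski closure and the Euclidean closure of any constructible subset of a complex algebraic variety agree; see for instance Mumford, \emph{Complex Projective Varieties I}, \S 10, or Shafarevich, \emph{Basic Algebraic Geometry II}, Chapter VII. Since $U$ is Zariski open in $\mathcal N$, it is constructible, and its Euclidean closure in $\mathcal N$ therefore equals its Zariski closure, which is $\mathcal N$. This finishes the proof.

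The only nontrivial input is the constructible-closure theorem just quoted; this is the main obstacle in the sense that everything else is purely formal. Its proof rests on the observation that a proper complex-analytic subset of a complex manifold has real codimension at least $2$ and hence cannot contain a Euclidean open ball, combined with a stratification of $\mathcal N$ by smooth loci of successive singular subvarieties. Since we only need the statement at the level of closures, not with any effective bounds, it suffices to cite this principle rather than reproduce its proof.
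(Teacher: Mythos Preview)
Your proof is correct and follows essentially the same route as the paper: both arguments first observe that $\mathcal N\setminus\mathcal L$ is Zariski dense in $\mathcal N$ by irreducibility, and then invoke the fact that over $\mathbb{C}$ the Zariski and Euclidean closures of a constructible set coincide. The paper packages this via the identity morphism $\mathcal N\to\mathcal N$ and cites the result as ``Chevalley's theorem'' (Harris, Theorem~3.16), whereas you cite Mumford and Shafarevich directly, but the substance is identical.
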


\begin{proof} Consider the identity morphism $\mathrm{id}: \mathcal N\to \mathcal N$. The Zariski closure of the image of this map restricted to $\mathcal N\setminus \mathcal L$ is $\mathcal N$, since $\mathcal N$ is irreducible and $\mathcal L$ is a union of proper subvarieties. By Chevalley's theorem, the Zariski closure of its image equals its Euclidean closure. 
\end{proof}

\begin{theorem}\label{thm: seven lines} Let $X_1,\ldots,X_7$ be disjoint points of $\PP^3$ on an irreducible conic $\beta$. Suppose that $L_1,\ldots,L_7$ are seven generic lines in $\PP^3$ through $X_i$, respectively. Then there is a sequence $\beta_n\to \beta$ in Euclidean topology such that $\beta_n$ meets each $L_i$, but none of $X_i$.
\end{theorem}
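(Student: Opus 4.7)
My plan is to realize $\beta$ as a non-isolated point in the variety of conics that meet all seven lines, by combining a dimension count with \Cref{prop: finitely}, and then to extract the required sequence via \Cref{le: eucl}.

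I would first set up the ambient space $\mathcal F:=\mathrm{Ch}_{\mathrm{plane}}(2,\PP^3)$, which is irreducible of dimension $8$ (see \Cref{ss: Chow}). For each line $L\subseteq \PP^3$, the defining property of the Chow form shows that $\beta'\in \mathcal F$ meets $L$ exactly when $\beta'^{\top}\nu_2^5(L)=0$, a single linear equation in the coordinates of $\mathcal F$ which does not vanish identically since a generic conic avoids a generic line. Writing $\mathcal V_L$ for the resulting hypersurface in $\mathcal F$ and $\mathcal V:=\bigcap_{i=1}^{7}\mathcal V_{L_i}$, we have $\beta\in \mathcal V$, and Krull's Hauptidealsatz implies that every irreducible component of $\mathcal V$ has dimension at least $8-7=1$. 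Fix an irreducible component $\mathcal V_0\subseteq \mathcal V$ through $\beta$.

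Next I would introduce the hypersurfaces $\mathcal W_i:=\{\beta'\in\mathcal F: X_i\in\beta'\}$ and argue that $\mathcal V_0\not\subseteq \mathcal W_i$ for every $i$. Since $\beta$ is an irreducible plane conic, Bezout's theorem inside its plane shows that $\beta$ meets any line of $\PP^3$ in at most two points, so the seven points $X_1,\ldots,X_7$ cannot all lie on any reducible conic of $\PP^3$; hence the hypothesis of \Cref{prop: finitely} is satisfied with any $X_i$ playing the distinguished role. Because $X_i\in L_i$, the intersection $\mathcal V_0\cap \mathcal W_i$ consists of conics passing through $X_i$ and meeting the remaining six lines $L_j$, and \Cref{prop: finitely} declares this set finite. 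Combined with $\dim \mathcal V_0\ge 1$, this forces $\mathcal V_0\not\subseteq \mathcal W_i$.

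Therefore $\mathcal V_0\setminus \bigcup_{i=1}^{7}\mathcal W_i$ is the complement in $\mathcal V_0$ of a finite union of proper subvarieties; by \Cref{le: eucl} its Euclidean closure equals $\mathcal V_0$ and in particular contains $\beta$. Any sequence $\beta_n\to \beta$ inside this open set then consists of conics that meet each $L_i$ (since $\beta_n\in \mathcal V_{L_i}$) at a point distinct from $X_i$ (since $\beta_n\notin \mathcal W_i$), which is exactly the required conclusion. The step I expect to be most delicate is the application of \Cref{prop: finitely}: once the Bezout observation rules out reducible conics through all seven $X_i$, the remainder is an essentially formal dimension argument.
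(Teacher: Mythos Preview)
Your proposal is correct and follows essentially the same route as the paper: define the variety $\mathcal V\subseteq\mathrm{Ch}_{\mathrm{plane}}(2,\PP^3)$ of conics meeting all seven lines, use the dimension bound $8-7=1$ to get a positive-dimensional component through $\beta$, invoke \Cref{prop: finitely} to see that only finitely many conics in $\mathcal V$ pass through any $X_i$, and conclude via \Cref{le: eucl}. Your write-up is in fact more careful than the paper's, explicitly verifying the ``no reducible conic'' hypothesis of \Cref{prop: finitely} and spelling out why $\mathcal V_0\cap\mathcal W_i$ being finite forces $\mathcal V_0\not\subseteq\mathcal W_i$.
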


\begin{proof} Consider the variety 
\begin{align}\begin{aligned}
    \mathcal Y:=\{\beta':\beta' \textnormal{ meets each }L_1,\ldots,L_7\}\subseteq \mathrm{Ch}_{\mathrm{plane}}(2,\PP^3)\subseteq \PP^{20}. 
\end{aligned}
\end{align}
Note that $\mathcal Y$ is cut out by exactly $7$ linear equations, namely $\beta^\top \nu_2^5(L_i)=0 \quad\textnormal{ for }\quad i=1,\ldots,7.$ Since $\dim \mathrm{Ch}_{\mathrm{plane}}(2,\PP^3)=8$, the irreducible components of $\mathcal Y$ are at least 1-dimensional.   

We deduce that $\beta$ is contained in an at least 1-dimensional irreducible components of $\mathcal Y$. Then by \Cref{le: eucl}, there is a sequence $\beta_n\in \mathcal Y$ converging to $\beta$. To see that at most finitely many of $\beta_n$ meet the points $X_1,\ldots,X_7$, we refer to \Cref{prop: finitely}.
\end{proof}

\begin{proof}[Proof of \Cref{thm: set-theo}] Recall by \Cref{prop: dim} that the conic multiview variety is irreducible, and of dimension $8$ as long as there are two cameras with distinct centers. Throughout the proof, we use \Cref{prop: blowup}.

$\Rightarrow)$ Assume without restriction that $c_1=\cdots =c_k$ for some $k\ge 2$ and that all other centers are distinct from these. We construct a tuple of back-projected cones $(B_1,\ldots,B_n)$ that satisfy the condition of the statement, but that does not lie in the conic multiview variety. Let $B_1,\ldots,B_k$ be any cones meeting in a double line $\beta$ through $c_1$, that meets none of the centers $c_{k+1},\ldots,c_n$. For $i\ge k+1$, define $B_i:=c_i\vee \beta$. This yields a tuple of cones that does not correspond to $\mathcal C_{\Ca,2}$. This is because in the image of $\Phi_{\Ca,2}$, $B_1,\ldots,B_k$ are always equal.

Let all centers be distinct. For a fixed line $E$, assume $c_i,i\in I$ are all centers that lie in $E$. Assuming $|I|\ge 3$, we show a contradiction. We may choose $B_i,i\in I,$ as in \Cref{ex: collin cam} such that they meet in the double line $\beta=E$, and define $B_i:=c_i\vee \beta$ for all other indices $i$. A generic such tuple $(B_1,\ldots,B_n)$ does not lie on the conic multiview variety by the dimension count. 

Suppose all centers are distinct, and no three lie on a line. Fix a conic curve $\beta$. Let $c_i,i\in I,$ be all center that lie on $\beta$. If $|I|\ge 8$, then $\beta$ is an irreducible conic curve, since all centers lie on $\beta$ are distinct, and no three of them are collinear. Let $B_i,i\in I,$ be generic unions of two planes such that one contains $\beta$ and the other is tangent to the curve. Then $(\beta,B_i)\in \overline{\Gamma}_{c_i}$ for every $i\in I$. Define the remaining cones as $B_i:=c_i\vee \beta$. Since $|I|\ge 8$, there is at least 8 dimensions of freedom in choosing $B_i,i\in I$. As this 8-dimensional set is not equal to $\mathcal C_{\Ca,2}$, it cannot be a subvariety of it. 

$\Leftarrow)$ The conditions hold by construction on the image $\mathrm{Im}\; \Phi_{\Ca,2}$ and it therefore suffices to prove that any tuple $\gamma$ that satisfies the condition lies in the conic multiview variety. Let $\beta$ be a conic curve that is contained in the intersection of $B(\gamma_i)$ and satisfies $(\beta,B(\gamma_i))\in \overline{\Gamma}_{c_i}$ for every $i$. In each case below, we find a sequence $\beta_n\to \beta$ meeting no centers such that $B_i:=B(\gamma_i)$ is the limit of $c_i\vee \beta_n$. 

If $\beta$ is a double line, then it meets at most two centers, say $c_1$ and $c_2$. The fact that $B_1$ and $B_2$ meet in a double line means that they are tangent along that line. It is fairly easy to find a desired sequence $\beta_n\to \beta$ if either one of $B_1$ or $B_2$ are unions of planes. Here, we assume that both are irreducible cones, implying that they meet in a degree-4 curve. They cannot meet in 4 lines, as such lines would have to equal the baseline spanned by $c_1$ and $c_2$, but they only meet in a double line there. This means that $B_1$ and $B_2$ also meet in an irreducible degree-2 curve $\beta'$. Consider a sequence $\beta_n'\to \beta'$. The intersection $(c_1\vee \beta_n')\cap (c_2\vee \beta_n')$ consists of $\beta_n'$ and another degree-2 curve, which we define to be $\beta_n$. As in the limit, $(c_1\vee \beta')\cap (c_2\vee \beta')$ is the union of $\beta$ and $\beta'$, we must have that $\beta_n\to \beta$. By construction, each $B_i$ is the limit of $c_i\vee \beta_n$. 

If $\beta$ is a union of two distinct lines $E_1\cup E_2$, we consider the at most $k\le 4$ centers $c_1,\ldots,c_k$ that lie on one of the two lines. Whether one of the centers lies in the intersection of $E_1\cap E_2$ does not essentially affect the proof. In either case, the back-projected cones $B_1,\ldots,B_k$ must be the union of two planes. As in the theory of line multiview varieties \cite{breiding2023line}, we can construct two sequences of lines $L_n\to E_1,L_n'\to E_2$ that meet for each $n$, i.e. $\beta_n=L_n\cup L_n'$ are conics, such that the cones $c_i\vee \beta_n$ tend to $B_i$. 

If $\beta$ is an irreducible degree-2 curve, let $c_1,\ldots,c_k$ with $k\le 7$ be the centers that lie on $\beta$. The back-projected cones $B_i$ for $i=1,\ldots,7$ are by assumption unions of planes, one plane $H$ being spanned by $\beta$ and the other plane $H_i$ is tangent to it. We now consider unions of planes $B_i':=H\cup H_i'$, where $H_i$ are generic, tangent to $\beta$. If we can show that $(B_1',\ldots,B_n')$ lie in $\mathcal C_{\Ca,2}$, then we are done by the genericity of $H_i'$. Take generic lines $L_i$ inside $H_i$. These are at most 7 in number, and by \Cref{thm: seven lines} there is a 1-dimensional family of conics $\beta_n\to \beta$ meeting each of these lines and no centers. As is seen in the proof of \Cref{le: blowup}, $B_{i,n}':=c_i\vee \beta_n$ tend to $B_i'$. This shows that $(B_1',\ldots,B_n')$ lies in the conic multiview variety, and we are done. 
\end{proof}





\section{Triangulation Complexity}\label{s: Tri Complexity}
In Structure-from-Motion, \textit{triangulation} is the recovery of world features given a known camera arrangement. Triangulation is a nearest point problem, mathematically formulated as follows in the general setting \cite{draisma2016euclidean}. For a variety $\mathcal{N}\subseteq \RR^m$ and a point $U\in\RR^m$ outside the variety, the nearest point problem is:
\begin{equation}\label{eq: ED_problem}
    \mathrm{minimize}\quad \sum_{i=1}^m(U_i-X_i)^2\quad \textnormal{subject to} \quad X\in \mathcal{N}\setminus \mathrm{sing}(\mathcal N),
\end{equation}
where $\mathrm{sing}(\mathcal N)$ is the singular locus of $\mathcal N$. This problem models the process of error correction and fitting noisy data to a mathematical model. The \textit{Euclidean distance degree} (EDD) of $\mathcal N$ is the number of complex solutions to the critical point equations associated to~\eqref{eq: ED_problem}, called \textit{ED-critical} points, for a given generic point $u\in \RR^m.$ For a variety $\mathcal N$ in a product of projective spaces, the EDD is the EDD of the affine variety we get by taking affine charts in each projective space. Here, we work with the standard affine patches $\gamma_0=1$ in the image curve spaces. The EDD is an estimate of how difficult it is to solve this problem by exact algebraic methods.

As suggested by numerical experiments in \texttt{HomotopyContinuation.jl}\cite{breiding2018homotopycontinuation}, we make the following conjecture.
\begin{conjecture} Given an arrangement of two generic cameras $\Ca$, the Euclidean distance degree of $\mathcal C_{\Ca,2}$ is $538$.
\end{conjecture}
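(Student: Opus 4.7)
The plan is to reduce the conjecture to a certifiable count of zero-dimensional solutions of an explicit polynomial system, and then verify the count $538$ both numerically (via monodromy, trace test, and root certification) and, independently, via a polar-degree computation. After the coordinate change described immediately after \Cref{thm: PC MV}, I would work with the rank-$1$ ideal generated by the three $2\times 2$ minors $f_1,f_2,f_3$ of the matrix in \eqref{eq: two view ideal intro}. Dehomogenizing by $\gamma_0 = \delta_0 = 1$ gives the affine chart of $\mathcal C_{\Ca,2}$ as an $8$-dimensional variety in $\CC^{10}$, and for generic data $u \in \CC^{10}$ the ED-critical points satisfy
\begin{equation}
    u - (\gamma,\delta) \;=\; \sum_{i=1}^{3} \lambda_i\,\nabla f_i(\gamma,\delta), \qquad f_1 = f_2 = f_3 = 0,
\end{equation}
saturated against the singular locus $\Sigma$ of $\mathcal C_{\Ca,2}$.

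The next step is to control $\Sigma$. By \Cref{prop: blowup}, the projection from the blowup $\overline\Gamma_c$ degenerates exactly over the strata of reducible conics, double lines, and tangent-plane degenerations; these pull back under the parametrization $(h,\alpha)\mapsto(\gamma,\delta)$ to explicit determinantal subvarieties of $\mathcal C_{\Ca,2}$. I would enumerate the components of $\Sigma$ by stratifying according to the rank of the matrix $M(\gamma,\delta)$ in \eqref{eq: two view ideal intro} and the factorization type of $\gamma$ and $\delta$, compute their defining ideals, and saturate each one from the Lagrangian ideal above. A Bertini-type transversality argument for the pullback of the rank-$1$ stratum of $2\times 3$ matrices under the polynomial map $(\gamma,\delta)\mapsto M(\gamma,\delta)$ would then show that the saturated critical system is zero-dimensional for generic $u$.

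For the rigorous count, I would run a monodromy solve in \texttt{HomotopyContinuation.jl} over a random loop in the parameter space $(\Ca, u)$, apply the trace test to certify completeness of the returned solution set, and certify each individual solution as a simple smooth root via \texttt{alphaCertified} or interval Newton. As an independent algebraic cross-check, I would compute the polar degrees of $\mathcal C_{\Ca,2}$ from the multi-degree of its conormal variety in $\PP^5 \times \PP^5 \times (\PP^5)^\vee \times (\PP^5)^\vee$ in \texttt{Macaulay2}; the multi-projective ED-degree formula then expresses the EDD as the appropriate weighted sum of these polar degrees. Agreement of the certified numerical count with the symbolic polar-degree sum proves the conjectured value $538$.

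The main obstacle will be the singular locus. Because the entries of $M(\gamma,\delta)$ are themselves quadratic in each factor, many natural loci — double lines, unions of two lines through an epipole, and the reducible-conic component from \eqref{eq: union of planes} — make entire rows of $M$ vanish, and the Lagrange system inherits spurious solutions of positive dimension or high multiplicity along these strata. Rigorously isolating the smooth part of $\mathcal C_{\Ca,2}$, or equivalently proving that all $538$ ED-critical points lie off every component of $\Sigma$ for generic data, is the technical heart of the proof and is precisely what prevents a direct symbolic Gröbner-basis verification at this scale.
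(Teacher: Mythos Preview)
The statement you are attempting to prove is presented in the paper as a \emph{conjecture}, not a theorem: the paper offers no proof at all. Its evidence consists of a single uncertified monodromy run using the \emph{parametrization} $\Phi_{\Ca,2}$ (not the implicit equations), on specialized translation cameras, with \emph{generic weights} on the squared-distance terms; the run returned $1076$ critical points in the parameter space, and the authors divide by $2$ because $\Phi_{\Ca,2}$ is generically $2$-to-$1$. There is no trace test, no root certification, no polar-degree cross-check, and no analysis of the singular locus. So your proposal is not a competing proof of something the paper proves---it is a research programme aimed at upgrading a heuristic count to a theorem.

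As a programme it is reasonable, but you should be aware of two concrete difficulties. First, the symbolic polar-degree route: the conormal variety of $\mathcal C_{\Ca,2}$ sits in a product of four copies of $\PP^5$, defined by bidegree-$(2,2)$ minors and their Jacobian conditions; computing its multidegree in \texttt{Macaulay2} is exactly the kind of computation the authors implicitly flag as out of reach when they say the systems are already too large for $n\ge 3$, and there is no indication the $n=2$ conormal ideal is tractable either. Without that independent symbolic confirmation, your argument collapses to certified numerics only. Second, your Lagrange formulation with implicit equations must contend with the fact that the three minors $f_1,f_2,f_3$ cut out a codimension-$2$ variety, so the gradients are everywhere linearly dependent and the multipliers $\lambda_i$ are never unique; you will need to work with the rank condition on the augmented Jacobian rather than the naive system you wrote, and the saturation against $\Sigma$ must also remove the locus where an entire row of $M(\gamma,\delta)$ vanishes (which you correctly identify as the hard part). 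By contrast, the paper sidesteps both issues by pulling everything back through the parametrization, at the cost of the $2$-to-$1$ overcount and the need to argue (which it does not) that no critical points are lost on the branch locus of $\Phi_{\Ca,2}$.
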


For the monodromy computations, we fix cameras to be on the form
\begin{align}
    C_1= \begin{bmatrix}
        1 & 0 & 0 & s_1\\
        0 & 1 & 0 & s_2\\
        0 & 0 & 1 & s_3
    \end{bmatrix} , \quad C_2= \begin{bmatrix}
        1 & 0 & 0 & t_1\\
        0 & 1 & 0 & t_2\\
        0 & 0 & 1 & t_3
    \end{bmatrix},
\end{align}
in order to have fewer parameters. To remedy that fact that the cameras are specialized, we compute the \textit{generic} EDD, which means that we multiply each term of \eqref{eq: ED_problem} by a generic real number. We use the parametrization map $\Phi_{\Ca,2}$ on an affine patch $\CC^8=\CC^3\times \CC^5$ of $(\PP^3)^*\times \PP^5$. The number of ED-critical computed by monodromy in this setup is 1076, however the map $\Phi_{\Ca,2}$ is generically 2-to-1 as seen in \Cref{fig: conics}.
 
Based on previous work, such as \cite{harris2018chern,EDDegree_point,duff2024metric}, we expect the number of ED-critical points given $n$ generic cameras and generic data to grow as a degree-8 polynomial in $n$. Due to the size of the systems involved, we have not been able to verify this numerically.



\bibliographystyle{alpha}
\bibliography{VisionBib}




\end{document}